\theoremstyle{definition}
\newtheorem{Def}{Definition}[section]
\newtheorem{algorithm}[Def]{Algorithm}
\theoremstyle{plain}
\newcommand{\Sn}{\mathbb{S}^{n}}
\newcommand{\SnR}{\mathbb{S}^{n}(\mathbb{R})}
\newcommand{\Sym}{\operatorname{Sym}}
\newcommand{\codim}{\operatorname{codim}}
\newcommand{\tr}{\operatorname{tr}}
\newcommand{\fS}{{\mathfrak S}}
\newcommand{\C}{{\mathbb C}}
\newcommand{\R}{{\mathbb R}}
\newcommand{\pp}{\mathbb{P}}
\newcommand{\Z}{{\mathbb Z}}
\title{Nodes on quintic spectrahedra}
\keywords{Symmetroid, spectrahedron, real determinantal surface}
\author{Taylor Brysiewicz}
\address{Max Planck Institute for Mathematics in the Sciences, Leipzig, Germany\\ \email{taylor.brysiewicz@mis.mpg.de}}
\author{Khazhgali Kozhasov}
\address{Technische Universit\"at Braunschweig, Germany\\ \email{k.kozhasov@tu-braunschweig.de}}
\author{Mario Kummer}
\address{Technische Universit\"at Dresden, Germany\\ \email{mario.kummer@tu-dresden.de}}
\date{2020/09/08}
\begin{document}

\maketitle

\begin{abstract}%
  \noindent
We classify transversal quintic spectrahedra by the location of $20$ nodes on the respective real determinantal surface of degree $5$.  We identify $65$ classes of such surfaces and find an explicit representative in each of them.
\end{abstract}

\section{Introduction}

Spectrahedra are \emph{nonlinear} generalizations of convex polytopes.
They are convex sets that are obtained as intersections of the \emph{cone $ \mathbb{S}^n_{\succcurlyeq}\subset \mathbb{S}^n $ of positive semidefinite matrices} with a real affine linear subspace $\mathcal{L}\subset \mathbb{S}^n$ of symmetric matrices.
Spectrahedra naturally appear in optimization: they are feasibility regions for \emph{semidefinite programs} in the same way as polyhedra are feasibility regions for \emph{linear programs}.
Apart from optimization, spectrahedra have appearances in algebraic geometry, convex geometry, statistics, and combinatorics.
It is conjectured that the class of spectrahedra is so rich that it contains \emph{hyperbolicity cones} of hyperbolic polynomials \cite{HV2007}.
In general, it is of interest to study spectrahedra in any dimension, however, even spectrahedra in dimension $3$ are far from being completely understood.
In the present work we are mainly interested in $3$-dimensional spectrahedra and work in the setting of projective geometry. In particular, we assume that $\mathcal{L}\subset \mathbb{S}^n$ is a real linear subspace of dimension $4$, whose corresponding spectrahedron is $\pp(\mathcal{L}\cap \mathbb{S}^n_{\succcurlyeq})$.

Spectrahedra are intimately related to algebraic surfaces called \emph{symmetroids}.
A \emph{symmetroid} is a surface in projective $3$-space whose defining polynomial is the determinant of a symmetric matrix of linear forms
\begin{align}\label{eq:LSSM}
  A(t,\bar{x})\ =\ t A_0 + x_1 A_1 + x_2 A_2 + x_3 A_3,\quad A_0, A_1, A_2, A_3\in \mathbb{S}^n.
\end{align}
The spectrahedron associated with \eqref{eq:LSSM} is the set $\{(t,\bar{x}) \,:\, A(t,\bar{x}) \succeq 0\}$. If this has nonempty interior, then 
$\det(A(t,\bar{x}))$ is a \emph{spectrahedral symmetroid} containing the algebraic boundary of the spectrahedron.

A generic symmetroid of degree $n$ carries ${n+1 \choose 3}$ \emph{nodal singularities} (points of multiplicity $2$) which are represented by matrices $A(t,\bar{x})$ of corank $2$.
Any symmetroid with this property will be called \emph{transversal}.
If a transversal symmetroid is spectrahedral, then $\rho\leq {n+1\choose 3}$ many of its singular points are real, amongst which $\sigma\leq \rho$ lie on the Euclidean boundary of the associated spectrahedron.
Characterizing possible \emph{combinatorial types} $(\rho, \sigma)$ of spectrahedral symmetroids of a given degree $n$ is a natural problem of real algebraic geometry. Combinatorial types of generic quartic ($n=4$) spectrahedral symmetroids were classified in \cite{DI2011}, see also \cite{quartic} for an alternative proof and explicit examples of symmetroids realizing each combinatorial type.

The main result of the present work is to provide a classification of combinatorial types of transversal quintic ($n=5$) spectrahedral symmetroids.

\begin{thm}\label{thm:class}
There exists a transversal quintic spectrahedral symmetroid of type $(\rho,\sigma)$ if and only if $0\leq \sigma \leq \rho\leq 20$, both $\rho$ and $\sigma$ are even, and $2\leq \rho$.
\end{thm}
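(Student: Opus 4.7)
The plan is to split the theorem into necessity (parities of $\rho,\sigma$ and the bound $\rho\geq 2$) and sufficiency (explicit construction of a representative for each admissible pair).

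For the parity of $\rho$, the argument is immediate: complex conjugation acts on the set of $\binom{6}{3}=20$ complex nodes, fixing the real ones and pairing the rest into complex-conjugate pairs, so $20-\rho$ is even and hence $\rho$ is even. For the parity of $\sigma$, a more refined real-geometric argument is needed. A node corresponds to a matrix $A(t,\bar x)$ of corank $2$, and it lies on the boundary of the spectrahedron exactly when the remaining $n-2$ eigenvalues of $A$ all share the same sign. I would try to pair spectrahedral nodes by one of two approaches: either (a) by exhibiting a canonical involution on the node set (for instance one arising from the adjoint/cofactor construction or from the discriminant double cover associated with the family $A(t,\bar x)$), or (b) by an Euler-characteristic computation on the real part of the symmetroid that expresses $\sigma$ as an even-valued topological invariant. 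The bound $\rho\geq 2$ should follow from the fact that a $3$-dimensional spectrahedron is a pointed convex body whose associated transversal symmetroid has enough real topological complexity to force at least one real node, which is then doubled by parity.

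For sufficiency, the plan is a deformation argument in the Grassmannian of real $4$-dimensional subspaces $\mathcal{L}\subset \mathbb{S}^5$. Starting from a single explicit transversal spectrahedron with maximal $(\rho,\sigma)=(20,20)$ as a base example, two elementary real moves should generate every other admissible pair: (i) a collision of two real nodes into a complex-conjugate pair, which lowers $\rho$ by $2$ while preserving transversality, and (ii) a motion that pushes a pair of spectrahedral nodes off the boundary of the spectrahedron (into its interior or into the complement of the PSD cone), lowering $\sigma$ by $2$ while preserving $\rho$. Iterating these two moves from the base example traverses the entire triangular array of $65$ pairs, and at each intermediate stage an explicit representative can be written down.

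The principal obstacle is sufficiency: constructing a concrete base example with $\rho=\sigma=20$ is already a substantial construction problem, and executing the elementary moves $65$ times in a way that produces \emph{explicit} representatives while controlling transversality and the precise effect on $(\rho,\sigma)$ almost certainly demands significant computer algebra and case analysis. A secondary subtlety is the clean proof of the necessity of $\rho\geq 2$ and of the parity of $\sigma$, since both rely on genuinely real-geometric features of spectrahedral symmetroids rather than on formal complex-algebraic data, and a uniform argument may not be available.
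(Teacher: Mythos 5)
Your necessity argument has two genuine gaps, and the more serious one is $\rho\geq 2$. The heuristic you offer (``a $3$-dimensional spectrahedron is a pointed convex body whose symmetroid has enough real topological complexity to force a real node'') would apply verbatim to every $n$, but the statement it would prove is false in general: the paper shows (Lemma \ref{lemma:rhopositive}) that a real $3$-space containing a strictly spectrahedral point with \emph{no} real nodes exists precisely when $n\equiv -1,0,1 \pmod 8$, so e.g.\ for $n=7,8,9$ one has transversal spectrahedral symmetroids with $\rho=0$. The correct proof for $n=5$ is genuinely arithmetic-topological: after normalizing so that the identity lies in $V$, the absence of real nodes is equivalent to the existence of a $2$-plane of real symmetric $5\times 5$ matrices with everywhere simple eigenvalues, and by Friedland--Robbin--Sylvester such a plane exists iff an invariant built from Radon--Hurwitz numbers exceeds $2$, which fails for $n=5$. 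Nothing in your sketch supplies this input. For the parity of $\sigma$ you only list two possible strategies (an involution on nodes, or an Euler-characteristic identity) without carrying either out; neither is what makes the paper's argument work, and the cofactor/adjoint idea in particular has no evident reason to preserve or pair the semidefinite nodes. The paper's proof is short but different: take a generic real $4$-dimensional subspace $L$ containing $V$, so $C=L\cap W_{n-2}$ is a curve whose real points all have corank exactly $2$, hence locally constant signature; a spectrahedral component of $C(\R)$ avoids the hyperplane of traceless matrices, so it is either an isolated point or null-homologous in $H_1(\pp(\SnR);\Z_2)$, and therefore meets the generic hyperplane $V$ in an even number of points. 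That is where the evenness of $\sigma$ comes from.

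On sufficiency your plan is close in spirit to what the paper actually does, but you overstate what can be proved. Your elementary move (ii), ``push a pair of spectrahedral nodes off the boundary, lowering $\sigma$ by $2$ while preserving $\rho$,'' is not an available wall-crossing: a real corank-$2$ node is spectrahedral iff its three nonzero eigenvalues share a sign, and along a real path of corank-$2$ matrices this can only change if a further eigenvalue crosses zero, i.e.\ by passing through corank $\geq 3$; the transitions the paper uses are real-pair $\leftrightarrow$ complex-conjugate-pair collisions, performed separately for semidefinite and for indefinite nodes (directions $(\pm 2,\pm 2)$ and $(\pm 2,0)$ in Figure \ref{fig:alltypes}), and a drop of $\sigma$ at fixed $\rho$ is reached by composing such moves. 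More fundamentally, neither you nor the paper proves that such moves connect all $65$ types; the paper does not need to, because it treats hill climbing purely as a search heuristic and then establishes existence for each type by an \emph{a posteriori} certificate: the nodes are recomputed as solutions of a square auxiliary system recording the determinant and all principal minors, and interval-arithmetic certification pins down $\rho$ and $\sigma$ rigorously for an explicit representative of every type (including the base cases $(20,20)$ and $(20,0)$, which are themselves only known through such certified computation). As written, your sufficiency argument rests on unproved connectivity claims and on a base example whose existence you have not established.
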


In Section \ref{section:prel} we determine restrictions that a combinatorial type of a spectrahedral symmetroid of any given degree $n$ has to satisfy (see Corollary \ref{cor:conditions}). For $n=5$ these are conditions given in Theorem \ref{thm:class}. In Section \ref{section:computations}, using a \emph{hill-climbing algorithm} we find quintic symmetroids realizing each of the $65$ possible types explicitly and certify the findings \textit{a posteriori}. The code and files for reproducing the certification may be found at \href{https://mathrepo.mis.mpg.de/QuinticSpectrahedra/index.html}{https://mathrepo.mis.mpg.de/}.

In Section \ref{section:symmetry} we study real quintic symmetroids with \emph{tetrahedral} and \emph{3-prismatic} symmetries.
In both cases singular points split up into orbits of the action of the corresponding subgroup of the symmetric group.
In particular, if an orbit is real, it either entirely lies on the topological boundary of the associated spectrahedron or is disjoint from it. 
Transversal symmetroids with tetrahedral symmetry form a one-parameter family and those with $3$-prismatic symmetry form a two-parameter family.
In each case we identify all possible configurations of orbits of singular points in terms of the parameters.
\section{Preliminary results}\label{section:prel}
We consider the vector space $\Sn$ of $n\times n$ complex symmetric matrices. We denote by $W_r\subset\pp(\Sn)$ the projective variety of matrices of rank at most $r$. Furthermore, by $\SnR\subset \Sn$ we denote the (real) subspace of real symmetric matrices and $\mathbb{S}^n_{\succ 0}$, $\mathbb{S}^n_{\succcurlyeq 0}\subset \SnR$ stand for the cones of positive definite and positive semidefinite matrices respectively. Before we focus on the case $n=5$, we state some general results. Let us first recall the following result from \cite{highcodim}.

\begin{lemma}
 The codimension of $W_r$ in $\pp(\Sn)$ equals $\binom{n-r+1}{2}$.
\end{lemma}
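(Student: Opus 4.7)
The plan is to compute $\dim W_r$ by passing to the open dense stratum $W_r^\circ \subseteq W_r$ of symmetric matrices of rank exactly $r$, and parametrizing $W_r^\circ$ by the kernel together with the induced nondegenerate form on the quotient. Explicitly, a rank-$r$ symmetric matrix $A$ is determined by $K = \ker A$, an $(n-r)$-dimensional subspace of $\C^n$, together with the induced nondegenerate symmetric bilinear form $\bar q$ on $\C^n/K$; conversely, any such pair $(K,\bar q)$ produces a rank-$r$ symmetric form on $\C^n$ by pullback along the quotient map $\C^n \twoheadrightarrow \C^n/K$.

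This gives a morphism
\[
\Phi\colon E \longrightarrow W_r^\circ,
\]
where $E$ is the open subvariety of the vector bundle over $\mathrm{Gr}(n-r,n)$ whose fiber over $K$ is $\Sym^2((\C^n/K)^\vee)$, cut out by the condition that the form be nondegenerate. The assignment $A \mapsto (\ker A,\text{induced form})$ provides an algebraic inverse, so $\Phi$ is an isomorphism. In particular $W_r^\circ$ is irreducible, so $W_r$ is irreducible of dimension $\dim E = r(n-r) + \binom{r+1}{2}$.

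Since $\dim \Sn = \binom{n+1}{2}$, the routine binomial identity
\[
\binom{n+1}{2} - r(n-r) - \binom{r+1}{2} = \binom{n-r+1}{2}
\]
yields the codimension statement at the affine level. Because $W_r \subset \Sn$ is a cone, this codimension descends to the projectivization, giving $\codim_{\pp(\Sn)} W_r = \binom{n-r+1}{2}$. The only place requiring genuine verification is bijectivity of $\Phi$, which is immediate since a symmetric matrix canonically recovers its kernel and induces a well-defined nondegenerate form on the quotient; the remainder is a short algebraic simplification, so I do not foresee any serious obstacle.
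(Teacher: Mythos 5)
Your proposal is correct. Note, though, that the paper itself gives no proof of this lemma: it simply recalls the statement with a citation to Harris--Tu, \emph{On symmetric and skew-symmetric determinantal varieties}, where the dimension count is obtained from essentially the same construction you use, namely the standard desingularization of $W_r$ as (an open part of) the total space of the bundle with fiber $\Sym^2\bigl((\C^n/K)^\vee\bigr)$ over the Grassmannian of $(n-r)$-planes $K$. So your argument is the standard one, carried out self-containedly: the correspondence $A \leftrightarrow (\ker A,\ \text{induced nondegenerate form on }\C^n/\ker A)$ is a genuine isomorphism onto the exact-rank stratum $W_r^\circ$ because the radical of a symmetric bilinear form coincides with the kernel of its Gram matrix, the dimension count $r(n-r)+\binom{r+1}{2}$ and the binomial simplification are right, and passing from the affine cone to $\pp(\Sn)$ preserves codimension. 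The only point you pass over silently is that $\dim W_r=\dim W_r^\circ$; this needs either the (easy) observation that $W_r$ is the closure of $W_r^\circ$, or the remark that the lower-rank strata $W_s^\circ$, $s<r$, have strictly smaller dimension by the same formula. Either one-line fix completes the argument, so the proof stands; compared with the paper's bare citation, your version has the advantage of being elementary and self-contained, while the reference to Harris--Tu additionally supplies the degree of $W_{n-2}$, which the paper needs later in Corollary \ref{cor:bound}.
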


\begin{Def}
  Given a real point $p=[A]\in\pp(\mathbb{S}^{\,n})$, $A\in \SnR$, the \emph{signature} of $p$ is the unordered pair $\{n_{+},n_{-}\}$ where $n_+$ and $n_-$ are the numbers of positive resp. negative eigenvalues of $A$. We call $p$ \emph{spectrahedral} if either $n_+$ or $n_-$ is zero and \emph{strictly spectrahedral} if it has signature  $\{n,0\}$. For a (strictly) spectrahedral $p=[A]$ one can take $A\in \mathbb{S}^n_{\succcurlyeq 0}$ (resp. $A\in \mathbb{S}^n_{\succ 0}$).
\end{Def}

In the following, we assume $n \geq 3$. 
Let $L\subset\pp(\Sn)$ be a sufficiently generic \emph{real} subspace of dimension $4$, then $C=L\cap W_{n-2}$ is a curve. 

\begin{lemma}
 The signature is constant on each connected component of $C(\R)$. 
\end{lemma}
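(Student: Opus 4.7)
The plan is to reduce everything to two facts: (i) for sufficiently generic $L$, no point of $C$ represents a matrix of corank strictly greater than $2$; (ii) the eigenvalues of a real symmetric matrix vary continuously with the matrix. Given these, the signature cannot change along $C(\R)$ without some eigenvalue crossing zero, which would force the corank to jump.

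First I would apply the preceding lemma to $W_{n-3}$: its codimension in $\pp(\Sn)$ equals $\binom{n-(n-3)+1}{2}=\binom{4}{2}=6$. Since $\dim L=4<6$, a sufficiently generic real $4$-plane $L$ avoids $W_{n-3}$ altogether. Thus every $[A]\in C$ has rank exactly $n-2$, i.e.\ corank exactly $2$, with precisely two zero eigenvalues.

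Next, for a fixed point $p=[A_0]\in C(\R)$ I would choose a continuous local lift $\widetilde{A}\colon V\to\SnR\setminus\{0\}$ on a connected open neighborhood $V\subset C(\R)$ of $p$; such a lift exists because the projection $\SnR\setminus\{0\}\to\pp(\SnR)$ is a locally trivial double cover. On $V$ each matrix $\widetilde{A}(q)$ has exactly two zero eigenvalues and $n-2$ nonzero ones, and these eigenvalues depend continuously on $q$. If the number of positive eigenvalues changed as $q$ moved through $V$, an intermediate value argument would produce a point $q_0\in V$ at which an additional eigenvalue vanishes, forcing $\widetilde{A}(q_0)\in W_{n-3}$, which contradicts the first step. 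Therefore the ordered pair $(n_+,n_-)$ is constant on $V$, and since passing to the opposite local lift only swaps $n_+$ with $n_-$, the unordered signature $\{n_+,n_-\}$ is a well-defined locally constant function on $C(\R)$. A locally constant function on a topological space is constant on each connected component, which is the claim.

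I do not expect a serious obstacle: both ingredients are standard. The only item warranting care is making the word \emph{generic} explicit---here the relevant open condition on $L$ is simply that it avoids $W_{n-3}$ (and that $L\cap W_{n-2}$ is a genuine $1$-dimensional variety), both of which hold on a Zariski open subset of the space of real $4$-planes in $\pp(\Sn)$ by the codimension lemma.
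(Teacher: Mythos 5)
Your argument is essentially the paper's proof: both use that $\codim(W_{n-3})=6>\dim L$ to conclude every point of $C$ has corank exactly $2$, then invoke continuity of eigenvalues to get local constancy of the signature, hence constancy on connected components. Your additional care with the local lift and the unordered signature is fine but does not change the route.
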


\begin{proof}
 Since $\codim(W_{n-3})=6$, every $p\in C$ has corank exactly $2$. Therefore, because the eigenvalues of a symmetric matrix depend continuously on their entries, we can find an open neighbourhood of each $p\in C(\R)$ where the signature is constant. Thus the signature is constant on each connected component.
\end{proof}

\begin{lemma}\label{lem:curve}
 Every connected component of $C(\R)$ which is spectrahedral either realizes the trivial homology class in $H_1(\pp(\SnR) ; \Z_2)$ or is an isolated point.
\end{lemma}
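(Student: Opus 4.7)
The plan is to show that every spectrahedral component $\Gamma$ of $C(\R)$, if it is not a single point, is contained in a contractible subset of $\pp(\SnR)$, and hence represents the trivial $\Z_2$-homology class there.

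By the previous lemma, the signature is constant on $\Gamma$. Since $\codim W_{n-3}=6>\dim L=4$, every point of $C$ has corank exactly $2$, so the points of the spectrahedral component $\Gamma$ all share the common signature $\{n-2,0\}$. Now consider the cone $\mathbb{S}^n_{\succcurlyeq 0}\subset\SnR$. It is pointed and convex, so the quotient map $\mathbb{S}^n_{\succcurlyeq 0}\setminus\{0\}\to\pp(\SnR)$ is injective; its image $K$ is homeomorphic to the convex slice $\{A\in\mathbb{S}^n_{\succcurlyeq 0}:\tr A=1\}\subset\SnR$, and is therefore contractible. For every $p\in\Gamma$ exactly one of the two unit-trace representatives of the line $p$ is PSD (the other is NSD, by the signature condition), giving a canonical continuous embedding $\Gamma\hookrightarrow K$.

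Assume now that $\Gamma$ is not a single point. Because $L$ is sufficiently generic and avoids $W_{n-3}$ (codimension $6$), the curve $C=L\cap W_{n-2}$ lies in the smooth locus of $W_{n-2}$, and Bertini's theorem yields that $C$ is a smooth complex projective curve. Its real locus is then a disjoint union of topological circles, so $\Gamma$ is a circle carrying a $\Z_2$-fundamental class $[\Gamma]\in H_1(\Gamma;\Z_2)$. The inclusion $\Gamma\hookrightarrow\pp(\SnR)$ factors as $\Gamma\hookrightarrow K\hookrightarrow\pp(\SnR)$, and since $K$ is contractible, the induced map $H_1(\Gamma;\Z_2)\to H_1(\pp(\SnR);\Z_2)$ vanishes. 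Hence $[\Gamma]$ realizes the trivial class in $H_1(\pp(\SnR);\Z_2)$.

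The conceptual core of the argument is the contractibility of the projectivized PSD cone $K$. The only mildly delicate step is observing that PSD representatives can be chosen consistently along $\Gamma$, which is immediate from the pointedness of $\mathbb{S}^n_{\succcurlyeq 0}$. I do not anticipate any serious obstacle beyond these observations.
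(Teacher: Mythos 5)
Your proof is correct, and at its core it rests on the same observation as the paper's: every spectrahedral point lies in a fixed contractible subset of $\pp(\SnR)$, so a spectrahedral component cannot represent the nontrivial class. The difference is the choice of that subset. The paper takes the complement of the traceless hyperplane (a nonzero PSD matrix has positive trace), i.e.\ an affine chart, and is done in one line; you take the projectivized PSD cone $K$, identified with the compact convex trace-one slice. Your route works but carries avoidable overhead: the containment $\Gamma\subset K$ is immediate from the definition of a spectrahedral point (each $p\in\Gamma$ has a PSD representative), so no ``consistent choice of representatives'' is needed; and the Bertini/smoothness step, which you invoke only to equip $\Gamma$ with a fundamental class, is unnecessary --- once $\Gamma$ sits inside a contractible subset, any $1$-cycle supported on $\Gamma$ is null-homologous in $\pp(\SnR)$, which also covers possible singular or isolated-point components uniformly, exactly as the paper's one-line argument does. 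Two small misstatements to repair: the map $\mathbb{S}^n_{\succcurlyeq 0}\setminus\{0\}\to\pp(\SnR)$ is not injective (it identifies positive multiples); what pointedness gives is that distinct rays have distinct images, so the restriction to the trace-one slice is a continuous bijection onto $K$ and hence a homeomorphism by compactness. Likewise a spectrahedral projective point has exactly one unit-trace representative (the NSD one has trace $-1$). Neither slip affects the conclusion.
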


\begin{proof}
 The (real) hyperplane $H\subset\pp(\Sn)$ of all traceless matrices contains no spectrahedral points. In particular, it is disjoint from any spectrahedral connected component of $C(\R)$. This implies the claim.
\end{proof}

\begin{cor}\label{cor:bound}
 Let $V\subset\pp(\Sn)$ be a generic real linear subspace of dimension $3$. Then $V\cap W_{n-2}$ consists of $\binom{n+1}{3}$ (complex) points. The number of real points in $V\cap W_{n-2}$ that are spectrahedral is even. Furthermore, the number of real points in $V\cap W_{n-2}$ is of the same parity as ${n+1\choose 3}$. 
\end{cor}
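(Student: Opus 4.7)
I plan to split Corollary \ref{cor:bound} into its three assertions and handle each using the preceding lemmas together with basic $\Z_2$-intersection theory. The complex count $|V\cap W_{n-2}|=\binom{n+1}{3}$ is a Bertini-type statement: by the codimension lemma, $\codim_{\pp(\Sn)}W_{n-2}=\binom{3}{2}=3$, so a generic complex $3$-dimensional $V$ meets $W_{n-2}$ transversally in $\deg W_{n-2}$ points, and the degree of the corank-$2$ symmetric determinantal locus is the classical number $\binom{n+1}{3}$ (Józefiak-Lascoux-Pragacz / Harris-Tu, which for $r=n-2$ specializes to $\binom{n}{2}\cdot\tfrac{n+1}{3}$). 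The parity of the real count then follows immediately by Galois symmetry: $V$ and $W_{n-2}$ are both defined over $\R$, so complex conjugation is an involution on $V\cap W_{n-2}$ pairing the non-real points, giving $|V(\R)\cap W_{n-2}(\R)|\equiv\binom{n+1}{3}\pmod 2$.

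The evenness of the spectrahedral real count is the substantive piece, and I plan to prove it by a pencil-plus-intersection-theory argument. I would extend $V$ to a generic real $4$-dimensional subspace $L\supset V$, so that $V$ becomes a projective hyperplane inside $L$ and $C=L\cap W_{n-2}$ is a real curve to which Lemma \ref{lem:curve} applies. Each spectrahedral connected component of $C(\R)$ is then either an isolated point, which a generic $V$ misses, or a smooth closed $1$-submanifold $\gamma\subset\pp(L(\R))$ that is nullhomologous in $\pp(\SnR)$. Because $\gamma$ is spectrahedral, it is disjoint from the traceless hyperplane $H\subset\pp(\Sn)$, and so in fact lies in the affine chart $\pp(L(\R))\setminus\pp(L\cap H)(\R)\cong\R^4$ of $\pp(L(\R))$; hence $\gamma$ is already nullhomologous in $\pp(L(\R))\cong\pp^4(\R)$. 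The mod-$2$ intersection pairing of a projective hyperplane with a nullhomologous $1$-cycle in $\pp^4(\R)$ vanishes, so $|V\cap\gamma|$ is even; summing over spectrahedral components yields the claim.

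The main delicate point is the genericity bookkeeping: one must confirm that a generic $L$ containing a fixed generic $V$ still satisfies the hypotheses of Lemma \ref{lem:curve} (so that $C$ is a curve with corank exactly $2$ at every point) and that $V$, as a hyperplane of $L$, is transverse to every $1$-dimensional spectrahedral component of $C(\R)$. Both conditions are open and non-empty, so the argument goes through, but it is the step that has to be written out carefully, in contrast with the almost formal use of Lemma \ref{lem:curve} and of real projective intersection theory on $\pp^4(\R)$.
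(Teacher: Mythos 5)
Your proposal is correct and follows essentially the same route as the paper: the paper also cites the Harris--Tu degree of $W_{n-2}$, realizes $V\cap W_{n-2}$ as a hyperplane section of the curve $C=L\cap W_{n-2}$ avoiding isolated real points, invokes Lemma \ref{lem:curve} to get an even intersection with each spectrahedral component, and gets the parity statement from complex conjugation. Your only additions are explicit genericity bookkeeping and a rerun of the traceless-hyperplane argument to make the component nullhomologous inside $\pp(L(\R))$ rather than $\pp(\SnR)$, which is a harmless refinement of the same idea.
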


\begin{proof}
  The degree of $W_{n-2}$ was computed in \cite[Prop.~12(b)]{highcodim}. By genericity we can write $V\cap W_{n-2}$ as the intersection of $C$ with a hyperplane that does not contain any isolated real points of $C$. By Lemma \ref{lem:curve} every spectrahedral connected component of $C(\R)$  intersects a generic hyperplane in an even number of points.

  The last assertion follows from the fact that $V$ is real and nonreal points in $V\cap W_{n-2}$ come in  complex conjugate pairs.
\end{proof}

\begin{lemma}\label{smoothspec}
 For each $n\geq4$ there is a full-dimensional set of real $3$-dimensional linear subspaces $V\subset\pp(\Sn)$ which contain a strictly spectrahedral point such that $V\cap W_{n-2}$ contains no spectrahedral points. 
\end{lemma}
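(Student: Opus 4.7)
The plan is to exhibit one specific $4$-dimensional real subspace $V_\epsilon\subset\SnR$ enjoying both properties, and then conclude by openness that a whole open (hence full-dimensional) neighborhood of $V_\epsilon$ in the real Grassmannian will work. I would start with the ``ball pencil'': put $E_i:=e_1 e_{i+1}^\top + e_{i+1} e_1^\top$ for $i=1,2,3$ and $V_0:=\mathrm{span}(\Id_n, E_1, E_2, E_3)\subset\SnR$. A block computation gives that $A_0(t,\bx):=t\,\Id_n + \sum_{i=1}^{3} x_i E_i$ has eigenvalues $t+r$, $t$ (with multiplicity $n-2$), and $t-r$, where $r:=\sqrt{x_1^2+x_2^2+x_3^2}$. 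Consequently the associated projective spectrahedron is the round ball $\{t\geq r\}$ (so $[\Id_n]\in V_0$ is strictly spectrahedral), and $V_0\cap W_{n-2}$ equals the plane $\{t=0\}\subset\pp(V_0)$, on which every nonzero matrix has signature $\{1,1\}$ and is therefore not spectrahedral. Since $V_0\cap W_{n-2}$ is a $\pp^2$ rather than a finite set, $V_0$ itself is non-transversal and must be perturbed.

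Next I would fix generic real symmetric matrices $F_1, F_2, F_3\in\SnR$ and set
\[
V_\epsilon := \mathrm{span}\bigl(\Id_n,\,E_1+\epsilon F_1,\,E_2+\epsilon F_2,\,E_3+\epsilon F_3\bigr).
\]
For all sufficiently small $\epsilon>0$, generic transversality together with $\codim W_{n-3}=6>3$ will force $V_\epsilon\cap W_{n-2}$ to consist of $\binom{n+1}{3}$ distinct points of corank exactly $2$. As $\epsilon\to 0$ these points accumulate on $V_0\cap W_{n-2}=\{t=0\}$, where the signature is uniformly $\{1,1\}$. By continuity of eigenvalues, for $\epsilon$ sufficiently small each intersection point then has one eigenvalue near $+r>0$, one near $-r<0$, and $n-2$ eigenvalues near $0$ (exactly two of which vanish by the corank-$2$ condition). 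Its signature $\{n_+,n_-\}$ therefore satisfies $n_+\geq 1$ and $n_-\geq 1$, so it is not spectrahedral. Combined with $[\Id_n]\in V_\epsilon$, this gives a $V_\epsilon$ satisfying both requirements.

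To conclude, both properties are open on the Grassmannian of real $4$-dimensional subspaces of $\SnR$: $\mathbb{S}^n_{\succ 0}$ is open in $\SnR$, and for $V$ in the open dense locus of transversal intersections with $W_{n-2}$ each of the $\binom{n+1}{3}$ signatures is locally constant in $V$, so non-spectrahedrality persists under small perturbation. Hence an open neighborhood of $V_\epsilon$ is the desired full-dimensional set. The main technical obstacle is the signature-control step, handled above by noting that on the compact locus $\{t=0\}\subset\pp(V_0)$ the extreme eigenvalues $\pm r$ are uniformly bounded away from zero, so a small enough perturbation leaves them strictly positive and strictly negative respectively.
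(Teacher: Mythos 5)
Your proof is correct, but it takes a genuinely different route from the paper. The paper argues by induction on $n$: the base case $n=4$ is quoted from the quartic classification in \cite{quartic}, and the inductive step pads a good subspace $V\subset\pp(\Sn)$ via $A\mapsto\left(\begin{smallmatrix}A&0\\0&\tr(A)\end{smallmatrix}\right)$, observing that a spectrahedral corank-$\geq 2$ point of the padded subspace would force a spectrahedral corank-$\geq 2$ point of $V$; it then closes, as you do, by perturbing. Your argument is instead a direct, self-contained construction uniform in $n\geq 4$: you perturb the ball pencil $\mathrm{span}(\Id,E_1,E_2,E_3)$, whose corank-$\geq 2$ locus is exactly the plane $\{t=0\}$ consisting of matrices of signature $\{1,1\}$, and use compactness of $\pp(\SnR)$ together with continuity of eigenvalues to conclude that for small $\epsilon$ every point of $V_\epsilon\cap W_{n-2}$ keeps one positive and one negative eigenvalue. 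This buys independence from the quartic result, at the cost of one step you assert rather than prove: that $V_\epsilon$ is transversal for all small $\epsilon>0$, even though the family passes through the very non-generic $V_0$; your concluding openness argument is also phrased only on the transversal locus. Both points are easily repaired, and in fact transversality is dispensable: the non-transversal locus is a proper real algebraic subset of the Grassmannian, so a generic line through $V_0$ meets it in finitely many values of $\epsilon$; more simply, your limit argument already shows that every point of $V_\epsilon\cap W_{n-2}$ is indefinite whatever its corank, and both defining properties are open on the whole Grassmannian (containing a positive definite point is clearly open, while the set of $V$ meeting $W_{n-2}$ in a spectrahedral point is the image of a closed incidence set under the projection from the compact space $\mathrm{Gr}\times\pp(\SnR)$, hence closed). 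The paper's final sentence (``any small perturbation works'') is the same openness observation.
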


\begin{proof}
 We proceed by induction on $n$. For $n=4$ this was shown in \cite{quartic}. Assume that $V\subset\pp(\Sn)$ is such a real $3$-dimensional linear subspace. 
 For $A\in\Sn$ let $$A'=\begin{pmatrix}A & 0 \\ 0 & \tr(A)
\end{pmatrix}.$$ We claim that the subspace $V'\subset\pp(\mathbb{S}^{n+1})$ of all $[A']$ with $[A]\in V$
has the desired properties. Clearly $[A]$ is (strictly) spectrahedral if and only if $[A']$ is (strictly) spectrahedral. Thus $V'$ contains a strictly spectrahedral point because $V$ contains such a point. Now assume that there is a matrix $0\neq A\in\Sn$ such that $[A']\in V'\cap W_{n-1}$ is spectrahedral. This implies that $A$ is positive (or negative) semidefinite and thus $\tr(A)\neq0$. Because $A'$ has corank at least $2$, the matrix $A$ has corank at least $2$ contradicting the assumption that $V\cap W_{n-2}$ contains no spectrahedral points. Any small perturbation of $V'$ also has the desired properties.
\end{proof}

Whereas for $n\geq 4$ there always exist generic real $3$-dimensional linear subspaces $V\subset\pp(\Sn)$ such that $V\cap W_{n-2}$ contains no spectrahedral points, only for some special values of $n$ the real locus of $V\cap W_{n-2}$ can be empty.
\begin{lemma}
\label{lemma:rhopositive}
There exists a real $3$-dimensional linear subspace $V\subset \pp(\Sn)$ containing a strictly spectrahedral point and such that the real locus of $V\cap W_{n-2}$ is empty if and only if $n=-1, 0, 1\ \mathrm{mod}\ 8$.
\end{lemma}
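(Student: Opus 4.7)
The plan is to reformulate the emptiness condition as a spectral condition on a linear pencil of symmetric matrices, and then extract a Stiefel--Whitney class obstruction on $\R\pp^2$. After translating a strictly spectrahedral point in $V$ to the identity and completing to a basis, one writes $V=\pp(\operatorname{span}_{\R}\{I,A_1,A_2,A_3\})$ with $A_i\in\SnR$ traceless. A point $[tI+\sum_i x_iA_i]\in V(\R)$ lies in $W_{n-2}$ precisely when $-t$ is an eigenvalue of multiplicity at least $2$ of $B(x):=\sum_{i=1}^3 x_iA_i$. Consequently, $V\cap W_{n-2}(\R)=\emptyset$ is equivalent to $B(x)$ having $n$ distinct real eigenvalues for every $x\in\R^3\setminus\{0\}$.

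Assuming this spectral condition, one can continuously order $\lambda_1(x)>\cdots>\lambda_n(x)$ on the sphere $S^2\subset\R^3$ and, by simple-connectivity of $S^2$, choose continuous unit eigenvectors $v_i\colon S^2\to S^{n-1}$. The identity $B(-x)=-B(x)$ forces $\lambda_i(-x)=-\lambda_{n+1-i}(x)$ and hence $v_i(-x)=\epsilon_i v_{n+1-i}(x)$ for constants $\epsilon_i\in\{\pm 1\}$ satisfying $\epsilon_i=\epsilon_{n+1-i}$. Descending to $\R\pp^2=S^2/\{\pm 1\}$: for each unordered pair $\{i,n+1-i\}$ with $i<n+1-i$, the rank-$2$ subbundle $L_i\oplus L_{n+1-i}$ descends to a flat bundle $\mathcal{L}_i\to\R\pp^2$ whose monodromy acts as a reflection in $O(2)$ (swapping the two eigenlines), and when $n$ is odd the middle eigenline $L_{(n+1)/2}$ descends to a line bundle $\mathcal{M}\to\R\pp^2$ which is either trivial or the tautological bundle $\gamma$. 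The Whitney sum of all these bundles equals the trivial rank-$n$ bundle on $\R\pp^2$.

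The core calculation is now purely cohomological. Each flat reflection rank-$2$ bundle splits as $\underline{\R}\oplus\gamma$ and therefore has total Stiefel--Whitney class $1+u\in H^*(\R\pp^2;\Z/2)=\Z/2[u]/(u^3)$, where $u$ is the generator of $H^1$. Setting $k=\lfloor n/2\rfloor$ and multiplying, the identity $\prod_i w(\mathcal{L}_i)\cdot w(\mathcal{M})=1$ becomes either $(1+u)^k=1$ (for $n$ even, or $n$ odd with trivial $\mathcal{M}$) or $(1+u)^{k+1}=1$ (for $n$ odd with $\mathcal{M}=\gamma$). Expanding $(1+u)^k\equiv 1+ku+\binom{k}{2}u^2\pmod{u^3}$, this equation holds iff $k\equiv 0\pmod 4$. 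Going through the three cases gives respectively $n\equiv 0,\,1,\,-1\pmod 8$, completing the ``only if'' direction.

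For the ``if'' direction I would exhibit explicit examples in the base dimensions $n=7,8,9$, constructed via configurations coming from octonion multiplication or irreducible representations of small real Clifford algebras, and then extend any valid example for $n$ to one for $n+8$ by direct-summing a fixed $8\times 8$ block whose eigenvalues remain disjoint from those of the original block on $S^2$ (achieved, e.g., by sufficient rescaling). The conceptual heart of the proof is the Stiefel--Whitney computation above; the main technical obstacle I expect is writing down and verifying the three explicit small-dimensional examples, since these are delicate though essentially direct once the correct algebra is in place.
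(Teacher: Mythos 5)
Your reduction is exactly the paper's: after a congruence taking the strictly spectrahedral point to $[\mathrm{Id}]$, the condition that the real locus of $V\cap W_{n-2}$ is empty becomes the condition that every nonzero real matrix in the $3$-dimensional span of $A_1,A_2,A_3$ has simple spectrum. At that point the paper simply cites the main theorem of Friedland--Robbin--Sylvester, which gives both directions at once (existence of such a $3$-dimensional space iff $\sigma(n)>2$, computed via Radon--Hurwitz numbers, iff $n\equiv 0,\pm1 \bmod 8$). Your Stiefel--Whitney argument is a correct, self-contained substitute for the \emph{necessity} half: the signs satisfy $\epsilon_i=\epsilon_{n+1-i}$, each descended rank-$2$ bundle on $\R\pp^2$ has monodromy a reflection and hence splits as $\underline{\R}\oplus\gamma$ with total class $1+u$, and $(1+u)^k=1$ in $\Z/2[u]/(u^3)$ forces $k\equiv 0\bmod 4$, giving $n\equiv 0,1,-1\bmod 8$ in the three cases. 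That half of your proposal is sound and genuinely different from the paper, which proves nothing here beyond the reduction.

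The gap is in the ``if'' direction. First, the base cases $n=7,8,9$ are only promised (``I would exhibit\ldots''); producing a $3$-dimensional space of real symmetric matrices of these sizes all of whose nonzero elements have \emph{simple} spectrum is precisely the nontrivial constructive content of the cited theorem, and it is not ``essentially direct.'' Second, the induction step $n\mapsto n+8$ by ``sufficient rescaling'' of an $8\times 8$ block fails as stated: you need the spectra of $B(x)$ and $cC(x)$ to be disjoint for \emph{every} $x$ on the sphere, and rescaling does nothing to eigenvalues at or near $0$. When $n$ is odd (e.g.\ extending $7\to 15$ or $9\to 17$), the middle eigenvalue of $B$ is an odd continuous function on $S^2$ and therefore vanishes somewhere; if the $8\times 8$ family also has a vanishing eigenvalue at such a point, no choice of $c$ separates the spectra. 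So the argument needs the extra (unproven) property that the $8$-dimensional family is nonsingular at every point of $S^2$, and note you cannot fix this by shifting the block by a multiple of the identity, since the family must remain linear in $x$. As it stands the existence half is incomplete; either supply the explicit families together with a corrected gluing argument, or do as the paper does and invoke the Friedland--Robbin--Sylvester theorem, which already contains both implications.
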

\begin{proof}
  Let $p=[C^\mathsf{T}C]\in V$ be a strictly spectrahedral point in $V$, where $C$ denotes a nonsingular real $n\times n$ matrix. Since congruence by a non-singular matrix leaves the varieties $W_r$ invariant, we have for the \emph{congruent space} $$V_C = \left\{[(C^\mathsf{T})^{-1}AC^{-1}]\,:\, [A]\in V\right\}$$ that $V_C\cap W_{n-2} = (V\cap W_{n-2})_C = \{[(C^\mathsf{T})^{-1}AC^{-1}]\,:\,[A]\in V\cap W_{n-2}\}$. We can assume that $C=\mathrm{Id}$ is the identity matrix, otherwise we replace $V$ with $V_C$ and $p=[C^\mathsf{T}C]\in V$ with $[\mathrm{Id}]\in V_C$. Let $\mathrm{Id}, A_1, A_2, A_3\in \SnR$ be a basis of the linear subspace $V\subset \pp(\Sn)$. Observe that for $(t,\bar{x})=(t,x_1,x_2,x_3)\in \R^4$, a real matrix $$A(t,\bar{x}) = t\, \mathrm{Id}+x_1 A_1+x_2 A_2+ x_3 A_3$$ is of corank at least two (that is, $[A(t,\bar x)]\in V\cap W_{n-2}$) if and only if the matrix $$A(0,\bar x)=x_1A_1+x_2A_2+x_3A_3$$ has a repeated (multiple) eigenvalue. Therefore, there exists a real linear subspace $V\subset \pp(\Sn)$ of dimension $3$ ($V=\left\{[A(t,\bar x)]\,:\, (t,\bar x)=(x_0,x_1,x_2,x_3)\right\}$ without loss of generality) as in the statement if and only there is a real $2$-dimensional plane $v\subset \pp(\Sn)$ ($v=\left\{[A(0,\bar x)]\,:\,\bar{x}=(x_1,x_2,x_3)\right\}$) containing only real matrices with simple (pairwise distinct) eigenvalues. By the main result from \cite{FRS84} existence of such a $2$-plane in $\pp(\Sn)$ is equivalent to $\sigma(n)>2$, where 
  \begin{equation}
  \sigma(n)=\begin{cases}
    2,\ &n\neq 0, \pm 1\ (\mathrm{\normalfont{mod}}\ 8),\\
    \rho(4b),\ &n=8b, 8b\pm 1.
  \end{cases}
\end{equation}
Here $\rho(m)$ is the \emph{Radon-Hurwitz number} of the natural number $m$, that is,
\begin{equation}\label{eq:RH}
  \begin{aligned}
    \rho(m)=2^c+8d,\quad m=(2a+1)2^{c+4d}
  \end{aligned},
\end{equation}
where $a, c,$ and $d$ are integers and $c\in \{0,1,2,3\}$. Finally, for $n=8b, 8b\pm 1$ one always has $\sigma(n)=\rho(4b)>2$, which is straightforward to check using  \eqref{eq:RH}.
\end{proof}
In general, the real part of the projective hypersurface defined by a \emph{hyperbolic polynomial} of degree $n$ can have at most $\lfloor \frac{n+1}{2}\rfloor$ connected components with equality only if the hypersurface has no real singular points.
\begin{cor}\label{cor:connectedness}
Let $V\subset \pp(\Sn)$ be a real $3$-dimensional linear subspace containing a strictly spectrahedral point. Then the real locus of $V\cap W_{n-1}$ contains at most $\lfloor \frac{n+1}{2}\rfloor$ connected components. Moreover, this bound is attained for some $V\subset \pp(\Sn)$ if $n=-1, 0, 1$ mod $8$. 
\end{cor}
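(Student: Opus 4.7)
The plan is to derive both the upper bound and the attainment from a single observation: the determinantal polynomial restricted to $V$ is hyperbolic. Write $f(A)=\det(A)$ as a homogeneous polynomial of degree $n$ on the real $4$-dimensional lift $\tilde V\subset\SnR$ of $V$, and let $p=[P]\in V$ be a strictly spectrahedral point, with $P\succ 0$. For any $B\in\tilde V$, the univariate polynomial $t\mapsto\det(P+tB)=\det(P^{1/2})^2\det(\mathrm{Id}+tP^{-1/2}BP^{-1/2})$ has only real roots, since $P^{-1/2}BP^{-1/2}$ is real symmetric. Hence $f|_{\tilde V}$ is a hyperbolic polynomial with respect to $P$. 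The bound $\lfloor(n+1)/2\rfloor$ on the number of connected components of $V\cap W_{n-1}(\R)$ then follows immediately from the classical fact recalled just before the statement of the corollary.

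For the attainment when $n\equiv -1,0,1\pmod 8$, I would invoke Lemma \ref{lemma:rhopositive} to produce a real $3$-dimensional $V\subset\pp(\Sn)$ containing a strictly spectrahedral point and such that $V\cap W_{n-2}(\R)=\emptyset$. The singular locus of $W_{n-1}$ is precisely $W_{n-2}$, so the real hypersurface $V\cap W_{n-1}(\R)$ has no real singularities. Combined with hyperbolicity from the first paragraph, this is exactly the situation in which the classical count for hyperbolic hypersurfaces becomes sharp: a smooth real projective hypersurface of degree $n$ in $\pp^3$ defined by a hyperbolic polynomial has exactly $\lfloor(n+1)/2\rfloor$ connected components, consisting of nested ovals around the hyperbolicity point. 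This is visible by restricting $f$ to generic real lines through $p$, along which $f$ has $n$ distinct real roots interlaced by the successive hyperbolicity cones, so the intersections with the hypersurface separate into the predicted number of components.

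The main subtlety lies in the last step: the preamble to Corollary \ref{cor:connectedness} states only the necessary direction (equality forces smoothness of the real hypersurface), while the attainment requires the converse — that smoothness together with hyperbolicity forces the count to be maximal. For this I would appeal to the standard real-topological description of smooth hyperbolic projective hypersurfaces (equivalently, to the interlacing of roots along lines through the hyperbolicity direction), which is classical in the theory of hyperbolic polynomials. Once this is granted, combining it with Lemma \ref{lemma:rhopositive} immediately yields the attainment claim in the stated range of $n$.
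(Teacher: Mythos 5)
Your argument is essentially the paper's intended (implicit) proof of this corollary: hyperbolicity of the restricted determinant with respect to the strictly spectrahedral point gives the upper bound via the classical fact quoted just before the statement, and attainment for $n=-1,0,1 \bmod 8$ follows from Lemma \ref{lemma:rhopositive} together with the classical nested-sheet description of smooth hyperbolic hypersurfaces. One small caveat: the phrase ``the singular locus of $W_{n-1}$ is $W_{n-2}$'' does not by itself exclude singularities of $V\cap W_{n-1}$ arising from tangency at corank-one points, but at a real corank-one matrix $A$ with kernel vector $v$ the derivative of $\det$ along the positive definite direction $P\in V$ is proportional to $v^{\mathsf T}Pv>0$, so such real singularities cannot occur --- this is the same multiplicity argument the paper records in its Remark for lines, and with it your proof is complete.
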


For small $n$ we describe how many connected components the real locus of $V\cap W_{n-1}$ can have.
\begin{cor}
Let $V\subset \pp(\Sn)$ be as in Corollary \ref{cor:connectedness}. Then for $n<4$ the real locus of $V\cap W_{n-1}$ is connected, while for $n=4,5$ it is either connected or has two connected components.
\end{cor}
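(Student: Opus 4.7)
By Corollary~\ref{cor:connectedness} the real locus of $V\cap W_{n-1}$ has at most $\lfloor (n+1)/2\rfloor$ connected components, which equals $2$ for $n\in\{3,4\}$ and $3$ for $n=5$. This already gives the sharp bound for $n=4$; the task reduces to ruling out the maxima for $n=3$ and $n=5$ and to realizing the remaining admissible counts.

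For $n=3$, the smooth real hyperbolic cubic containing a positive definite point is of Segre type $F_2$, consisting of an ovoid bounding the positive definite region together with a nested non-orientable sheet---two connected components. However, Lemma~\ref{lemma:rhopositive} combined with Corollary~\ref{cor:bound} forces at least two real points in $V\cap W_{n-2}$ (since $3\not\equiv -1,0,1\pmod{8}$ and the real count is even), so our symmetroid carries at least two genuine real nodes. Each real node is locally a quadratic cone where two sheets of the symmetroid meet; a short local analysis of how a corank-$2$ point arises in the limit from a smooth hyperbolic cubic shows that each real node necessarily joins a sheet of the would-be ovoid to a sheet of the would-be non-orientable component, merging them into a single connected piece. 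An alternative is a direct enumeration of the finitely many real topological types of Cayley cubic symmetroids containing a positive definite form.

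For $n=5$, along any ray in $V(\R)$ emanating from a strictly positive definite matrix the five eigenvalues of $A(t,\bar x)$ are real and interlace monotonically, so the real components of the symmetroid met by the ray are successively ordered by signature. Three nested components would therefore cut $\pp^3(\R)$ into exactly three regions with signature classes $\{5,0\},\{4,1\},\{3,2\}$---the topology of a smooth hyperbolic quintic. Lemma~\ref{lemma:rhopositive} and Corollary~\ref{cor:bound} again guarantee at least two real nodes (as $5\not\equiv -1,0,1\pmod{8}$ and the parity is even), and the same local-cone analysis forces each such real node either to pinch one component or to bridge two adjacent ones, in either case dropping the component count below three.

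For realization, the one-component case in each degree $n\in\{3,4,5\}$ follows from Lemma~\ref{smoothspec} combined with the merging argument above; the two-component case for $n=4$ is exhibited in~\cite{quartic}, and for $n=5$ it appears among the explicit symmetroids produced in Section~\ref{section:computations}. The main obstacle is the local analysis at each real node: translating ``a corank-$2$ pencil point glues two local sheets'' into a precise global statement on the number of connected components of the real symmetroid requires, for each real node, identifying which two smooth hyperbolic sheets the local cone glues in the smooth limit---a tractable but intricate case analysis that feeds on the parity statement of Corollary~\ref{cor:bound} and the ray-interlacing used for the upper bound.
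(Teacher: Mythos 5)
Your argument has a genuine gap at exactly the two cases where something beyond Corollary~\ref{cor:connectedness} is needed, namely $n=3$ and $n=5$ (for $n=2,4$ the bound $\lfloor (n+1)/2\rfloor$ already suffices). The step you rely on there --- that each real node ``necessarily joins a sheet of the would-be ovoid to the non-orientable sheet'' for $n=3$, respectively ``pinches one component or bridges two adjacent ones'' for $n=5$, so that the maximal component count cannot occur --- is never proved; you yourself call it the ``main obstacle'' and defer it to ``a tractable but intricate case analysis.'' That missing step is the entire content of the exclusion argument, and as sketched it also silently assumes that a real node has real branches (is not a solitary node) and that the singular symmetroid is reached as a controlled limit of smooth hyperbolic surfaces. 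The paper does not do any such local gluing analysis: for $n\le 4$ it simply quotes the classification in \cite{quartic}, and for $n=5$ it combines Lemma~\ref{lemma:rhopositive} (since $5\not\equiv -1,0,1 \bmod 8$, the real locus of $V\cap W_{3}$ is never empty, so the symmetroid always has a real singular point) with the general fact recorded just before Corollary~\ref{cor:connectedness}, that a hyperbolic hypersurface of degree $n$ attains $\lfloor (n+1)/2\rfloor$ connected components only if its real part is smooth. You could have invoked that same statement --- it is precisely the clause you are trying to re-derive --- and your proof would close; without it, the cases $n=3$ and $n=5$ are not established.

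A secondary point: your realization claims are also unsupported. Lemma~\ref{smoothspec} only yields subspaces with no spectrahedral nodes ($\sigma=0$) and says nothing about connectedness of $(V\cap W_{n-1})(\R)$, and the certification in Section~\ref{section:computations} certifies combinatorial types $(\rho,\sigma)$, not numbers of connected components, so ``appears among the explicit symmetroids'' needs separate justification. Strictly speaking the corollary only asserts the dichotomy, so realization is optional; the paper nevertheless exhibits both possibilities for $n=5$ via the types $(2,2)$ (disconnected) and $(4,2)$ (connected) from Theorem~\ref{thm:class}, illustrated in Figure~\ref{fig:symms14}. Note also that for $n<4$ connectedness is forced once the exclusion step is in place, so your excursion into Segre types and Cayley cubics is unnecessary.
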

\begin{proof}
The cases of $n=2,3,4$ follow from the classification in \cite{quartic}. For $n=5$ Corollary \ref{cor:connectedness} gives an upper bound of three components. However, the zero set of a hyperbolic polynomial of degree $5$ can have three connected components only if its real part is smooth and Lemma \ref{lemma:rhopositive} shows that this is never the case for $V \subset \mathbb{P}(\mathbb{S}^5)$. It follows from Theorem \ref{thm:class} that there exist subspaces in $\pp(\mathbb{S}^5)$ of types $(\rho,\sigma)=(2,2)$ and $(\rho,\sigma)=(4,2)$. In the first case $(V\cap W_{4})(\R)$ has two connected components, while in the second case it is connected. See Figure \ref{fig:symms14} that illustrates the two possible cases.
\end{proof}

\begin{rem}
 Let $L\subset\pp(\SnR)$ be a one-dimensional real linear space which contains a strictly spectrahedral point. Then $L$ will contain $n$ points of rank $n-1$ (counted with multiplicity). Since for real symmetric matrices geometric multiplicities equal algebraic multiplicities, any intersection point of $L$ with $W_{n-1}$ of multiplicity $>1$ will already be a point in $W_{n-2}$.
 
 Naively, one might expect that the same happens for real linear subspaces $V\subset\pp(\SnR)$ of dimension $3$ containing a striclty spectrahedral point. Namely, that any intersection point of such $V$ with $W_{n-2}$ of multiplicity $>1$ has rank at most $n-3$. But this is not the case. For example let $V$ be the linear subspace spanned by the four matrices:
 \[\left(\begin{smallmatrix}2 & 0 & 0 & 0 & 0\\ 0 & 2 & 0 & 0 & 0\\ 0 & 0 & 1 & 0 & 0\\ 0 & 0 & 0 & 0 & 0\\ 0 & 0 & 0 & 0 & 0\end{smallmatrix}\right),
\left(\begin{smallmatrix}0 & 0 & 0 & 0 & 0\\ 0 & 0 & 0 & 0 & 0\\ 0 & 0 & 2 & 1 & 0\\ 0 & 0 & 1 & 2 & 0\\ 0 & 0 & 0 & 0 & 2\end{smallmatrix}\right),
\left(\begin{smallmatrix}1 & 0 & -1 & 1 & 0\\ 0 & 1 & 0 & 0 & 1\\ -1 & 0 & 2 & -1 & 0\\ 1 & 0 & -1 & 1 & 0\\ 0 & 1 & 0 & 0 & 1\end{smallmatrix}\right),
\left(\begin{smallmatrix}1 & 0 & 1 & 0 & 1\\ 0 & 1 & 0 & -1 & 0\\ 1 & 0 & 3 & 0 & 1\\ 0 & -1 & 0 & 1 & 0\\ 1 & 0 & 1 & 0 & 1\end{smallmatrix}\right).\]One checks that $V$ intersects $W_3$ in $5$ different points, all of them of multiplicity $4$, spectrahedral, and of rank exactly $3$. The associated symmetroid is the union of a cubic and a quadratic hypersurface which intersect in a sextic curve. This sextic curve is the union of two (nonreal) complex conjugate rational normal curves (of degree $3$). The $5$ points of rank $3$ are exactly the points where these two rational normal curves intersect.
\end{rem}

Let  $V\subset \pp(\Sn)$ be a generic real subspace of dimension $3$ containing a strictly spectrahedral point.
Every spectrahedral symmetroid is of the form $V\cap W_{n-1}$.
Recall that by $\rho$ and $\sigma$ we denote the number of real and spectrahedral nodes on the symmetroid $V\cap W_{n-1}$. The pair $(\rho,\sigma)$ is called the \emph{combinatorial type} of the symmetroid $V\cap W_{n-1}$ or, simply, of $V$.
Summarizing some of the above results we give necessary conditions on possible combinatorial types.

\begin{cor}\label{cor:conditions}
  The combinatorial type $(\rho,\sigma)$ of a transversal spectrahedral symmetroid of degree $n$ satisfies the following conditions:
  \begin{enumerate}[label=\roman*)]
  \item $0\leq \sigma\leq \rho\leq {n+1\choose 3}$,
  \item $\sigma$ is even,
  \item $\rho$ and ${n+1 \choose 3}$ are of the same parity,
  \item $\rho$ can be zero if and only if $n=-1,0,1\ \text{mod}\ 8$.
  \end{enumerate}
\end{cor}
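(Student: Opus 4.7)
The plan is to assemble the statement directly from the previously proved lemmas and corollaries, since each of the four conditions corresponds to a result already in hand.

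First, for condition (i), the inequality $\sigma \leq \rho$ is immediate from the definitions (spectrahedral nodes are, in particular, real nodes). The upper bound $\rho \leq \binom{n+1}{3}$ follows from Corollary \ref{cor:bound}: a generic real $3$-dimensional subspace $V$ intersects $W_{n-2}$ in exactly $\binom{n+1}{3}$ complex points, and the real ones among them are precisely the real nodes of the associated symmetroid $V \cap W_{n-1}$. So the real nodes cannot exceed the total count of nodes.

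Next, conditions (ii) and (iii) are direct restatements of the two parity assertions in Corollary \ref{cor:bound}. Indeed, that corollary asserts that the number of spectrahedral real points of $V \cap W_{n-2}$ (which is $\sigma$) is even, and that the number of real points of $V \cap W_{n-2}$ (which is $\rho$) has the same parity as $\binom{n+1}{3}$.

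Finally, condition (iv) is exactly the content of Lemma \ref{lemma:rhopositive}. The statement $\rho = 0$ is equivalent to saying that the real locus of $V \cap W_{n-2}$ is empty for some real $3$-dimensional subspace $V$ containing a strictly spectrahedral point, and by that lemma such $V$ exists if and only if $n \equiv -1, 0, 1 \pmod{8}$. Since there are no obstructions beyond these, the proof is essentially a one-paragraph assembly; the only ``obstacle'' is to make sure the generic genericity hypothesis in Corollary \ref{cor:bound} is compatible with the conditions under which $\rho, \sigma$ have been defined, which is automatic because transversal symmetroids are, by definition, those coming from generic $V$.
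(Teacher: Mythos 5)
Your proposal is correct and matches the paper's own (implicit) argument: the corollary is stated there as a summary, with conditions (i)--(iii) coming from Corollary \ref{cor:bound} (plus the trivial inequality $\sigma\leq\rho$) and condition (iv) from Lemma \ref{lemma:rhopositive}, exactly as you assemble them.
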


In \cite{DI2011} and \cite{quartic} it was shown that for $n=4$ these are the only restrictions. Theorem \ref{thm:class} asserts that also for the quintic ($n=5$) case the above conditions on combinatorial types are sufficient. 
In order to prove sufficiency we provide explicit examples of symmetroids realizing each combinatorial type. We first describe in Section \ref{section:symmetry}  how we found some of the types by making use of symmetries. Realization of the remaing types is discussed in detail in Section \ref{section:computations}.

\section{Quintic spectrahedra with symmetry}\label{section:symmetry}
Let $G$ be a finite group. Let $V$ and $U$ be a $4$ resp. $5$-dimensional represention of $G$. If we have a $G$-linear map $\varphi: V\to\Sym_2(U)$, then the preimage under $\varphi$ of the set of positive semidefinite symmetric bilinear forms on $U$ is a spectrahedron invariant under the action of $G$. The same holds true for the preimage of the corank $k$ locus for any $k=0,\ldots,5$.

\begin{lemma}
 If there is a strictly spectrahedral point, then the invariant part $V^G$ of $V$ contains a strictly spectrahedral point. In particular, $V^G$ is nontrivial.
\end{lemma}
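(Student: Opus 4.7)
The plan is to apply the Reynolds (averaging) operator to a strictly spectrahedral point and check that strict spectrahedrality is preserved. Concretely, suppose $[A]\in V$ is a strictly spectrahedral point, that is, $\varphi(A)\in\Sym_2(U)$ is a positive (or negative) definite symmetric bilinear form on $U$. Consider
\[
 \bar{A}\ =\ \frac{1}{|G|}\sum_{g\in G} g\cdot A\ \in\ V^G.
\]
By construction $\bar{A}$ lies in the invariant subspace $V^G$, so once we show $[\bar{A}]$ makes sense and is strictly spectrahedral we are done.

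Since $\varphi$ is $G$-equivariant, we have
\[
 \varphi(\bar{A})\ =\ \frac{1}{|G|}\sum_{g\in G} g\cdot \varphi(A).
\]
The action of $G$ on $\Sym_2(U)$ is induced from its linear action on $U$, so for each $g\in G$ the automorphism $g\colon U\to U$ is a real linear isomorphism; consequently the cone of positive definite symmetric bilinear forms on $U$ is preserved by $G$. Therefore every summand $g\cdot\varphi(A)$ is positive definite.

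Positive definiteness is preserved by positive linear combinations (the positive definite cone is a convex cone), hence $\varphi(\bar{A})$ is positive definite. In particular $\varphi(\bar{A})\neq 0$, which forces $\bar{A}\neq 0$, so $[\bar{A}]$ is a well-defined point of $\pp(V)$ lying in $V^G$ and strictly spectrahedral. The ``in particular'' statement is then immediate. There is essentially no obstacle here beyond verifying that the $G$-action on $\Sym_2(U)$ preserves the positive definite cone, which is automatic from the fact that $G$ acts by linear isomorphisms on $U$.
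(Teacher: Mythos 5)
Your proposal is correct and follows essentially the same route as the paper: average the strictly spectrahedral point over the group, use $G$-equivariance of $\varphi$ and the fact that the positive definite cone is $G$-stable and closed under sums (or averages) to conclude the averaged point is strictly spectrahedral and lies in $V^G$. The normalization by $|G|$ and the explicit remark that the average is nonzero are immaterial additions.
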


\begin{proof}
 Let $v\in V$ be a strictly spectrahedral point, i.e. $\varphi(v)$ is positive definite. Since the action of $G$ on $\Sym_2(U)$ preserves positive definiteness, the symmetric bilinear form $$\varphi\left(\sum_{g\in G} g.v\right)=\sum_{g\in G} g.\varphi(v)$$ is positive definite as well. Now the claim follows from $\sum_{g\in G} g.v\in V^G$.
\end{proof}

This shows that $V$ must contain (at least) one copy of the trivial represention of $G$. The remaining  $3$-dimensional space $V'$ corresponds to the symmetries of a $3$-polytope. One can check that neither octahedral symmetry nor icosahedral symmetry give rise to transversal quintic symmetroids.

\subsection{Tetrahedral symmetry}
The symmetry group of the tetrahedron is the symmetric group $\fS_4$ and the corresponding representation $V'$ is the standard representation. Thus $V$ is just given by $\fS_4$ permuting the coordinates of $\R^4$. If we denote by $V_\lambda$ the irreducible representation of $\fS_4$ corresponding to the partition $\lambda$, then $\R^4=V_4\oplus V_{3,1}$. In this case, the only possibility to obtain transversal symmetroids is when $U=V_{3,1}\oplus V_{2,2}$. After applying some suitable changes of bases on both sides, we can then assume that the images of the unit vectors in $\R^4$ under $\varphi$ are given by the four matrices
\[
 \left(\begin{smallmatrix}16+2t&24+t&24+t&-8&-8\\24+t&16+2t&24+t&8&0\\24+t&24+t&16+2t&0&8\\-8&8&0&16&8\\-8&0&8&8&16\end{smallmatrix}\right),
\left(\begin{smallmatrix}16+2t&-8+t&-8+t&8&8\\-8+t&-16+2t&-8+t&8&16\\-8+t&-8+t&-16+2t&16&8\\8&8&16&16&8\\8&16&8&8&16\end{smallmatrix}\right),\]\[
\left(\begin{smallmatrix}-16+2t&-8+t&-8+t&-8&8\\-8+t&16+2t&-8+t&-8&0\\-8+t&-8+t&-16+2t&-16&-8\\-8&-8&-16&16&8\\8&0&-8&8&16\end{smallmatrix}\right),
\left(\begin{smallmatrix}-16+2t&-8+t&-8+t&8&-8\\-8+t&-16+2t&-8+t&-8&-16\\-8+t&-8+t&16+2t&0&-8\\8&-8&0&16&8\\-8&-16&-8&8&16\end{smallmatrix}\right)
\] for some real number $t\in\R$. For generic values of $t$ there are $20$ (complex) points in the rank $3$ locus. In this case, the $20$ points split up into three $\fS_4$-orbits, one of length $12$ and two of length $4$, and we have the following cases.
If $48<t$, then the two $4$-orbits consist of semidefinite real points. The $12$-orbit consists of indefinite real points (Figure \ref{fig:symmtet}, yellow). 
If $12<t<48$, then one $4$-orbit consists of semidefinite real points and the other $4$-orbits of indefinite real points. The $12$-orbit consists of semidefinite real points (Figure \ref{fig:symmtet}, green).  
If $0<t<12$, then one $4$-orbit consists of semidefinite real points and the other $4$-orbit of indefinite real points. The $12$-orbit consists of nonreal points  (Figure \ref{fig:symmtet}, red).
If $-2<t<0$, then the two $4$-orbits consist of indefinite real points. The $12$-orbit consists of nonreal points.  The spectrahedron is empty (Figure \ref{fig:symmtet}, blue).
If $t<-2$, then none of the $20$ points are real. The spectrahedron is empty (Figure \ref{fig:symmtet}, purple). 
\begin{figure}[h]
\centering
 \includegraphics[width=3.3cm]{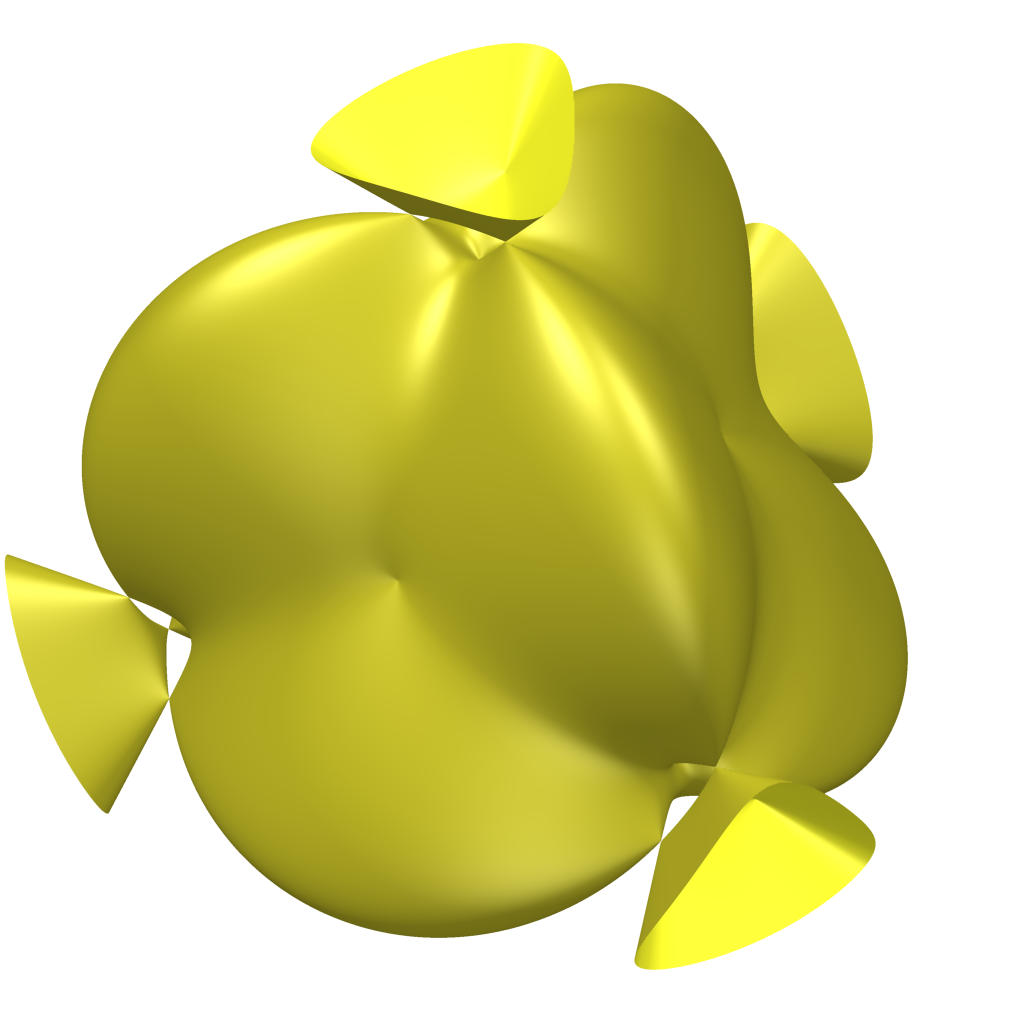} 
 \includegraphics[width=3.3cm]{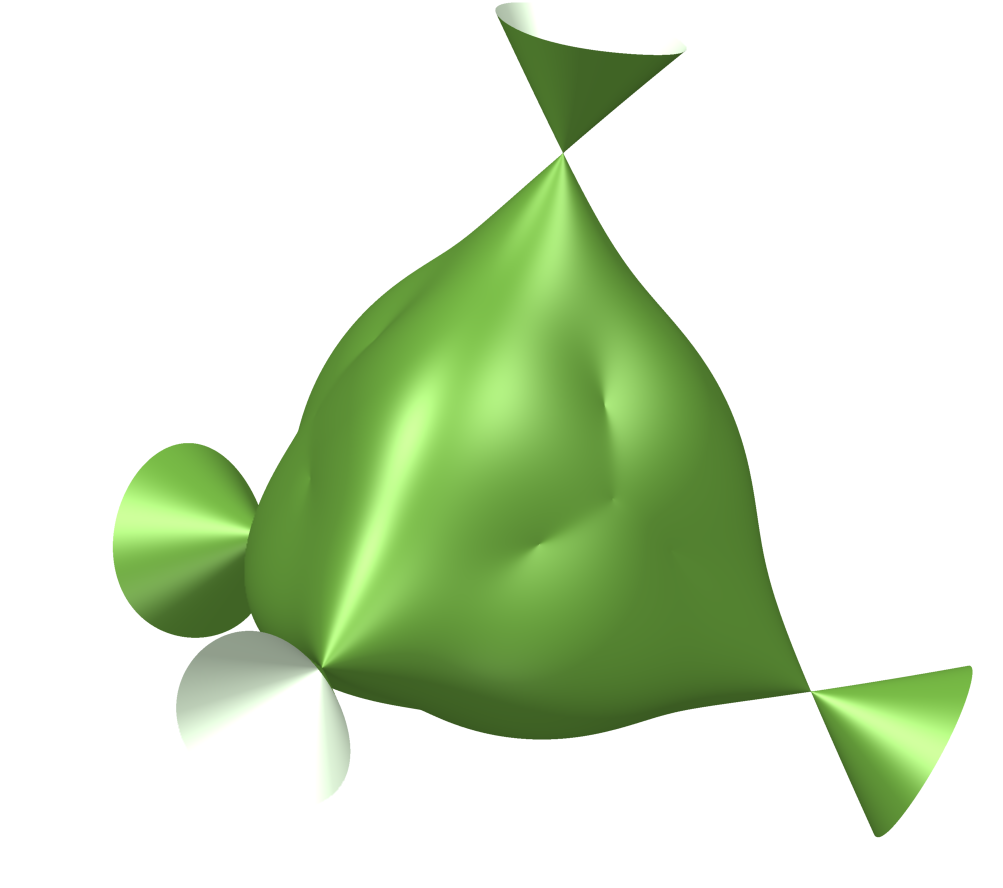}
 \includegraphics[width=3.3cm]{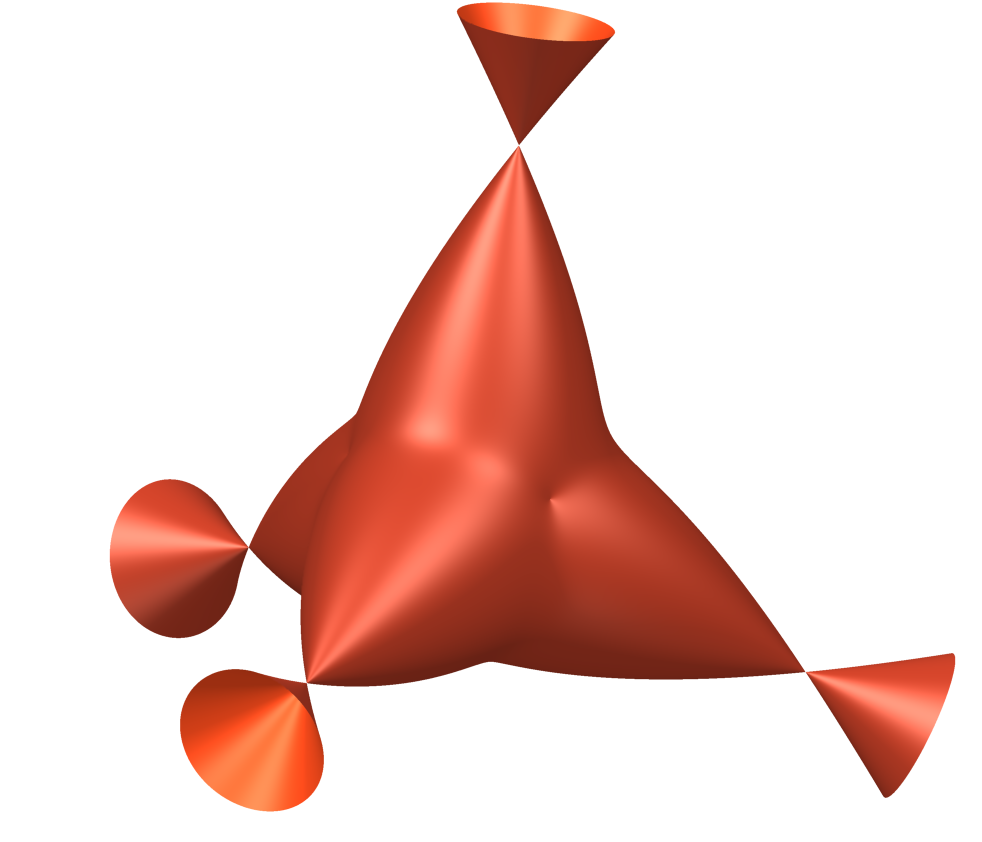} \\
 \includegraphics[width=3.3cm]{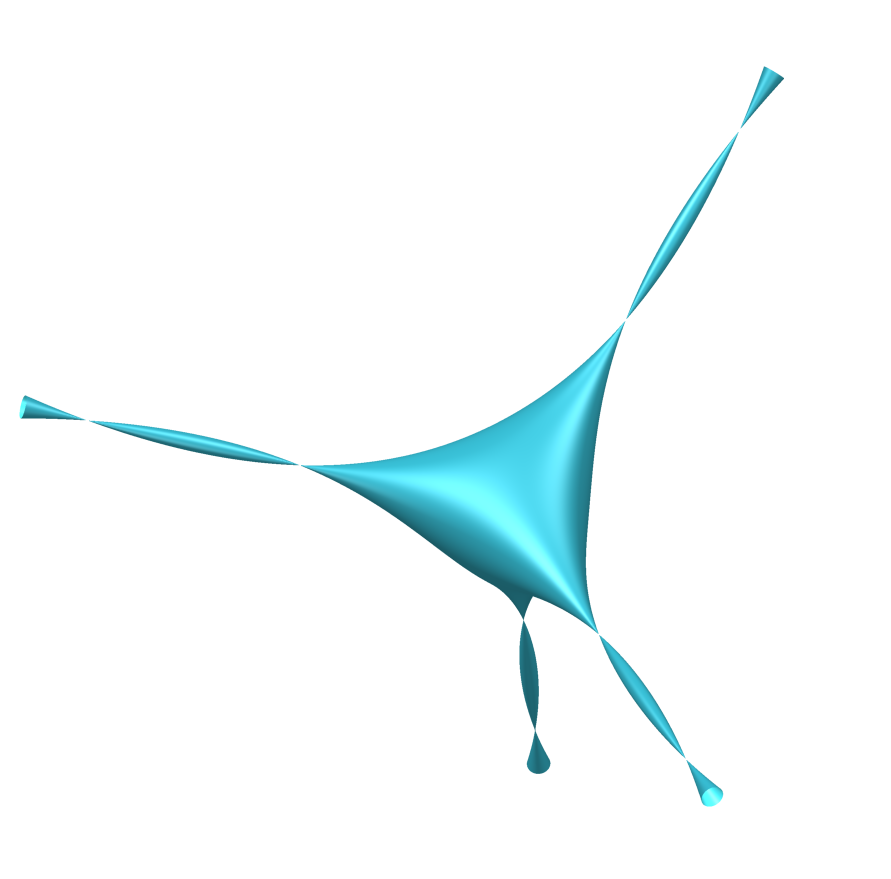}
 \includegraphics[width=3.3cm]{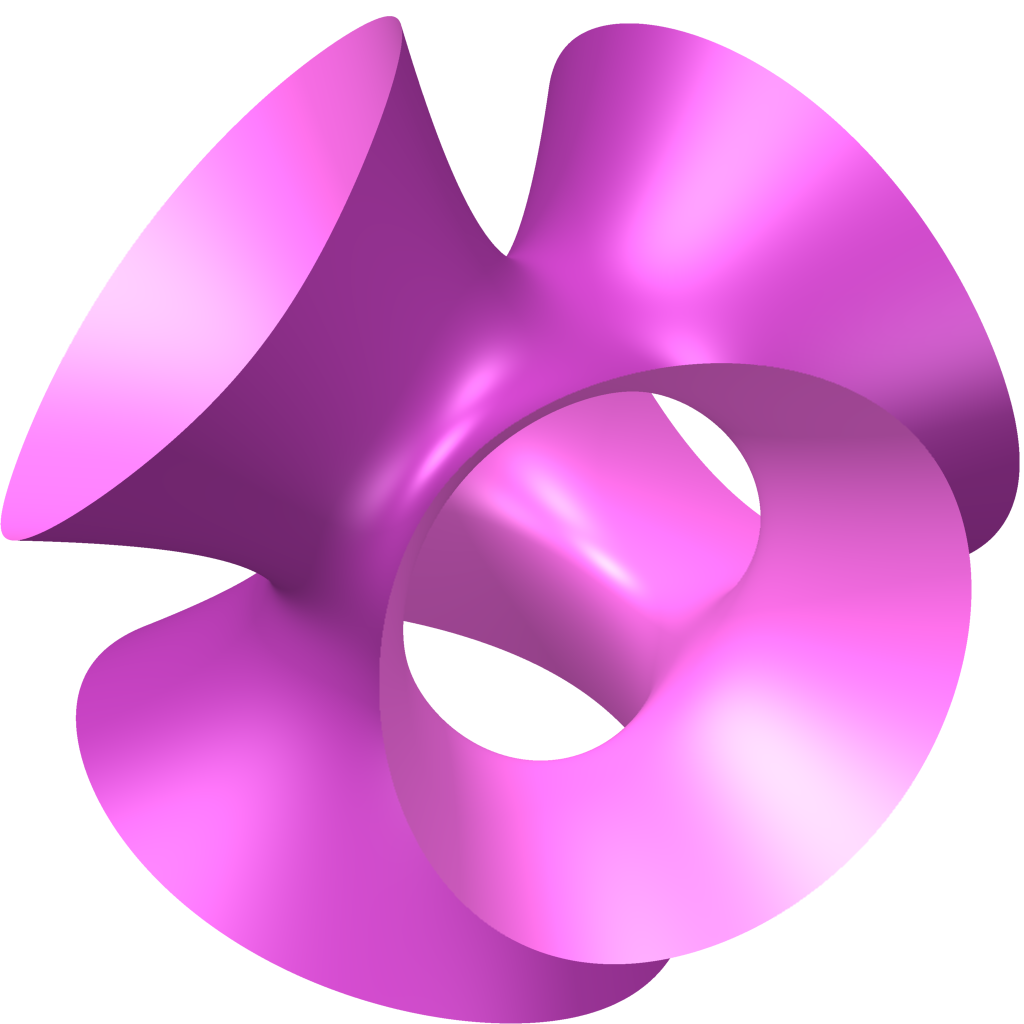}
\caption{Quintic symmetroids with tetrahedral symmetry.}
\label{fig:symmtet}
\end{figure}
\subsection{3-prismatic symmetry}
The symmetry group of the triangular prism is $G=\fS_3\times \fS_2$ where $\fS_3$ is the symmetry group of the triangle and $\fS_2$ corresponds to a reflection that interchanges the two triangles of the prism. Again one calculates that (after applying some suitable changes of bases on both sides) the images of the unit vectors in $\R^4$ under $\varphi$ are given by the four matrices
\[\left(\begin{smallmatrix}1& a& a& 0& 0\\ a& b+10& b+5& 0& 0\\ a& b+5& b+10& 0& 0\\ 0& 0& 0& 3& 2\\ 0& 0& 0& 2& 3\end{smallmatrix}\right),
\left(\begin{smallmatrix}1& -a& 0& 0& 0\\ -a& b+10& 5& 0& 0\\ 0& 5& 10& 0& 0\\ 0& 0& 0& 3& 1\\ 0& 0& 0& 1& 2\end{smallmatrix}\right),
\left(\begin{smallmatrix}1& 0& -a& 0& 0\\ 0& 10& 5& 0& 0\\ -a& 5& b+10& 0& 0\\ 0& 0& 0& 2& 1\\ 0& 0& 0& 1& 3\end{smallmatrix}\right),
\left(\begin{smallmatrix}0& 0& 0& 0& 0\\ 0& 0& 0& 2& 1\\ 0& 0& 0& 1& 2\\ 0& 2& 1& 0& 0\\ 0& 1& 2& 0& 0\end{smallmatrix}\right).\]
For generic values of $a,b$ there are $20$ (complex) points in the rank $3$ locus. In this case, the $20$ points split up into five $G$-orbits, one of length $2$, three of length $4$ and one of length $6$. The following table lists all different types we can get in this way together with one value for $(a,b)$ realizing this type:

\begin{figure}[!htpb]
\begin{minipage}{0.55\textwidth}
{\footnotesize
\begin{tabular}{|c|c|c|c|c|}
 \hline Type & Spectrahedron & a & b & Region \\
  &  nonempty? & & & \\
\hline (20,14) & Yes & $\frac{1}{2}$ & $-\frac{3}{2}$ & Brown\\
\hline (20,8) & Yes & 2 & 6 & Blue \\
\hline (18,6) & Yes & $\frac{1}{2}$ & 4 & Gray \\
 \hline(14,14) & Yes & $\frac{1}{2}$ & -2 & Orange \\
 \hline(14,8) & Yes & 1 & 1&Yellow \\
 \hline(14,2) & Yes & 1 & 6&Red \\
 \hline(10,4) & Yes & 1 & -2&Cyan\\
 \hline(8,2) & Yes & $\frac{1}{2}$ & 1&Green\\
 \hline(2,2) & Yes & $\frac{1}{3}$ & -2&Purple \\
 \hline(6,0) & No & 2 & -16 & n.a.\\
 \hline(0,0) & No & 1 & -16&n.a.\\ \hline
\end{tabular}
}
\end{minipage}
\begin{minipage}{0.45\textwidth}
\centering
  \includegraphics[width=4cm]{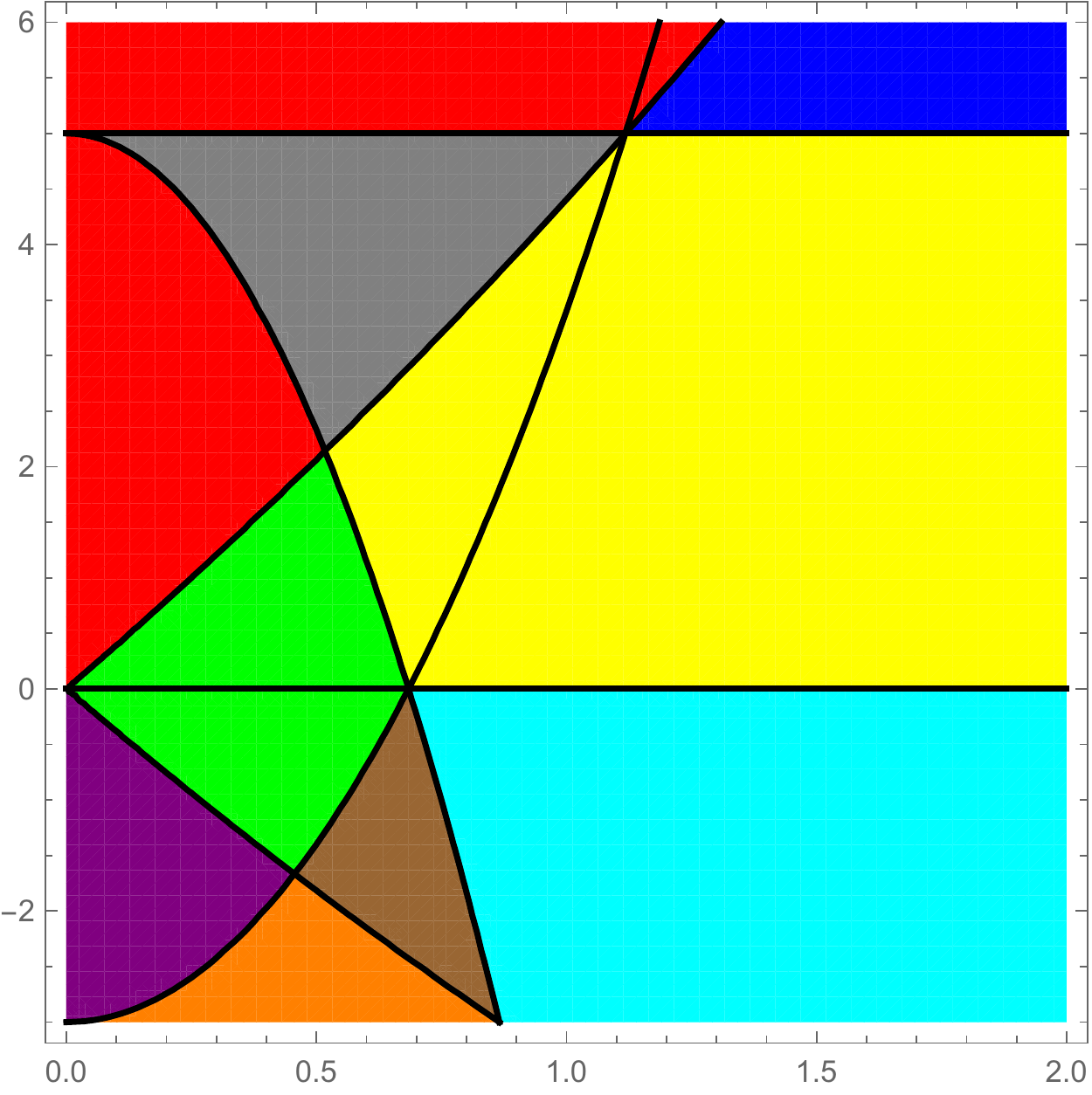}
  
  \caption{{\footnotesize The discriminant (black) divides the $(a,b)$-plane into different regions. The color of each region indicates the type of the symmetroids that it contains.}}
  \end{minipage}
 
\label{fig:symmetricpara}
\end{figure}

\begin{figure}[h!]
\centering \includegraphics[width=5cm]{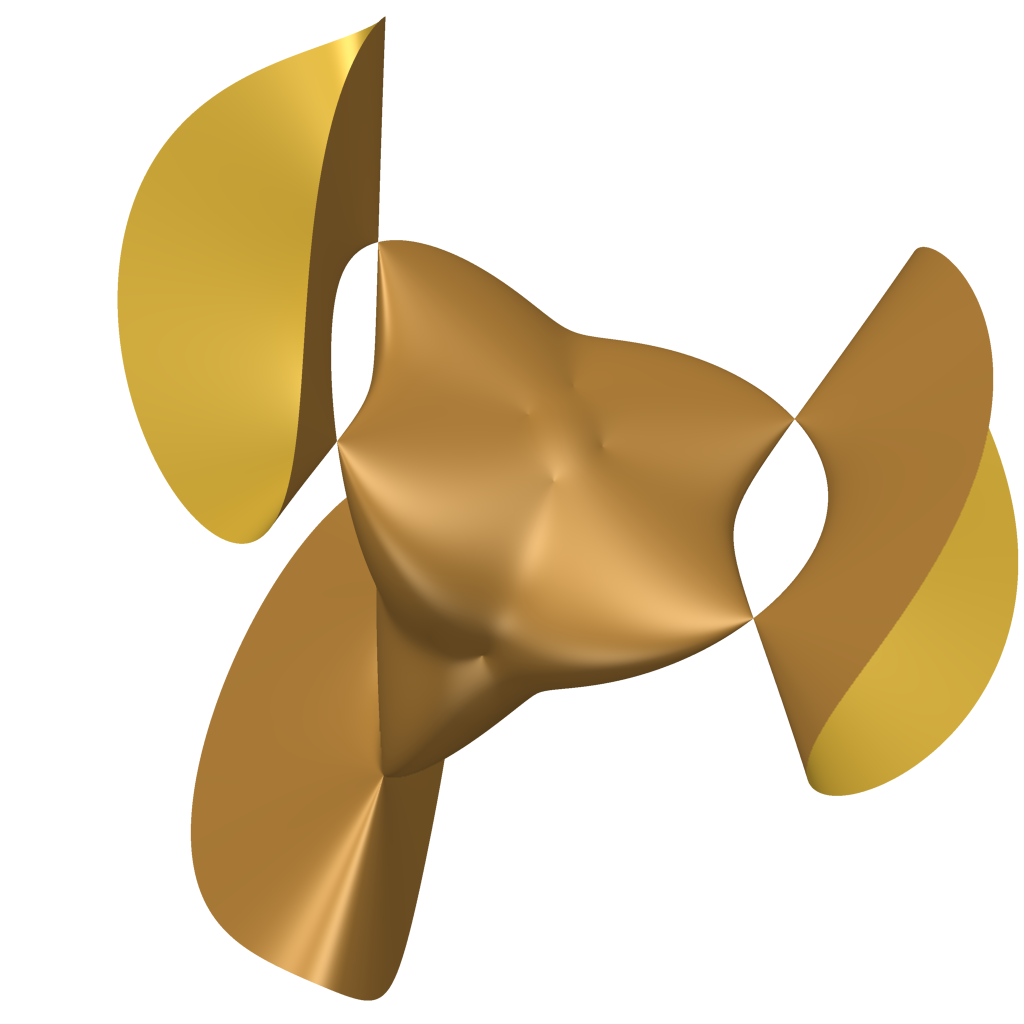} \quad
 \includegraphics[width=5cm]{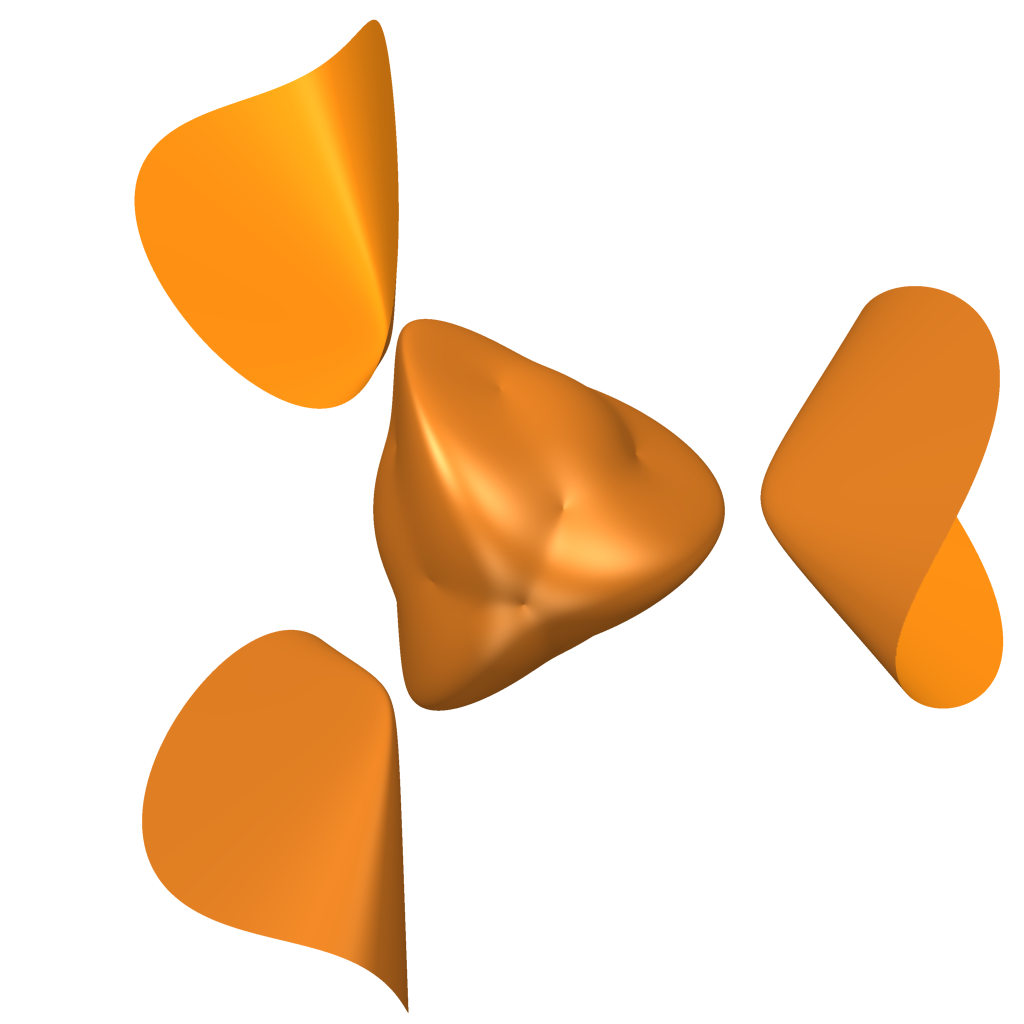} 
\caption{Two quintic symmetroids with $14$ nodes on the spectrahedron. The one on the left has $6$ additional real nodes. The real part of the one on the right is disconnected.}
\label{fig:symms14}
\end{figure}

\section{Computations}
\label{section:computations}
We show that the conditions in Theorem \ref{thm:class} are sufficient by exhibiting explicit examples of each possible combinatorial type. We explain how we used numerical algebraic geometry to find these examples and certify their correctness. First we describe equations which cut out the nodes of quintic symmetroids.
\subsection{Equations}

We consider full-dimensional spectrahedra so without loss of generality, we assume that our linear space of symmetric matrices $V_A\subset \mathbb{S}^5$ is of the form 
$$A(t,\bar{x}):=t \mathrm{Id} + x_1 A_1+x_2A_2+x_3A_3.$$
These linear spaces are parametrized by triples $A=(A_1,A_2,A_3) \in (\mathbb{S}^5)^3$ such that $\mathrm{Id},A_1,A_2,A_3$ are linearly independent. The symmetroid $V_A \cap W_4$ is the set of zeros of $D_A(t,\bar{x}):= \det(A(t,\bar{x}))$ and generically the $20$ nodes $V_A \cap W_3$ are the solutions to the polynomial system 
\begin{equation}
\label{eq:node_equations}
\frac{\partial}{\partial t} D_A(t,\bar{x}) = \frac{\partial}{\partial x_1} D_A(t,\bar{x}) = \frac{\partial}{\partial x_2} D_A(t,\bar{x}) = \frac{\partial}{\partial x_3} D_A(t,\bar{x}) =0 . 
\end{equation}
\begin{rem}
\label{rem:atleasttwenty}
Since the solutions to this system are points in $\mathbb{P}^3$, there are always at least $20$ complex projective solutions to \eqref{eq:node_equations} counted with multiplicity. This number may be larger (or infinite) at nongeneric values of $A$, but never smaller.
\end{rem}
Although the solutions to \eqref{eq:node_equations} are projective points, in practice we compute them in a generic real affine chart represented by an equation of the form
$\ell(t,\bar{x})=c_0t+c_1x_1+c_2x_2+c_3x_3+c_4$, where $c_i \in \mathbb{R}$.
Let $F_A(t,\bar{x})$ denote the polynomial system \eqref{eq:node_equations} together with $\ell(t,\bar{x})=0$.
Each solution $s$ to $F_A(t,\bar{x})$ corresponds to a matrix $A(s)$. When $A \in (\mathbb{S}^5(\mathbb{\R}))^3$, the following conditions concerning reality coincide:
$$s \in \mathbb{R}^4 \iff A(s) \in \mathbb{S}^5(\mathbb{R}) \iff [A(s)] \cap \mathbb{P}(\mathbb{S}^5(\mathbb{R})) \neq \emptyset.$$
One checks whether $A(s)$ is semidefinite either by checking that its eigenvalues have the same sign, or by assessing the signs of its principal minors.  

\subsection{Numerical algebraic geometry}
We use the \texttt{julia} package \texttt{HomotopyContinuation.jl} \cite{jl} to solve instances of $F_{A}(t,\bar{x})$ for various parameters $A$. The equations $F_{A}(t,\bar{x})$ form a parametrized polynomial system which can be solved using a \emph{parameter homotopy}. This process involves numerically solving $F_{A^{0}}(t,\bar{x})$ for random \emph{complex} symmetric matrices $A^{0}$. Afterwards, the method of \emph{homotopy continuation} is used to solve any instance of $F_A(t,\bar{x})$ with $A \in (\mathbb{S}^5(\mathbb{R}))^3$ quickly using a \emph{parameter homotopy} from the start system $F_{A^0}(t,\bar{x})$ to the target system $F_A(t,\bar{x})$. The following snippet of code solves $20000$ instances of $F_A(t,\bar{x})$ in about $30$ minutes.

\begin{figure}[h]
	\centering
	\vspace{-0.06in}	
	\begin{tiny}
		\begin{BVerbatim}	
using HomotopyContinuation, LinearAlgebra

@var x[1:4], a[1:3,1:5,1:5]
A=vcat([LinearAlgebra.I],[Symmetric(a[i,:,:]) for i in 1:3])
Ax=sum([x[i]*A[i] for i in 1:4])
affine_chart=sum(x.*randn(Float64,4))+randn()
eqs=vcat([differentiate(det(Ax),x[i]) for i in 1:4],affine_chart)
quintic_system=System(eqs;parameters=unique(vcat(vec(A[2]),vec(A[3]),vec(A[4]))))
init_params=randn(ComplexF64,45)
init_sols=solve(quintic_system,target_parameters=init_params)
samples=solve(quintic_system,
              solutions(init_sols),
              start_parameters=init_params,
              target_parameters=[randn(Float64,45) for i in 1:20000]);
Solving for 20000 parameters... 100%|||||||||||||||||| Time: 0:31:51
  # parameters solved:  20000
  # paths tracked:      400000
		\end{BVerbatim}
	\end{tiny}
\end{figure}

\subsection{A hill-climbing algorithm}
Finding the maximal number of real solutions in a parametrized polynomial system is a popular problem in applications. One very successful method for doing this is a hill-climbing algorithm developed by Dietmaier \cite{dietmaier}. We adapt his idea to our setting, where we care about more than the number of real solutions to $F_A(t,\bar{x})$: we want to find all combinatorial types.

To best explain our method, we first describe what a hill-climbing algorithm is.
The input of a hill-climbing algorithm is a parameter space $\mathcal P$ equipped with a metric, and an objective function $\varphi: \mathcal P \to R$ where $R$ is some ordered set. The goal of the algorithm is to maximize $\varphi$ over $\mathcal P$ and the steps are simple. Begin at some $P_* \in \mathcal P$ and compute $\varphi_*=\varphi(P_*)$. Compute the values of $\varphi$ evaluated at many parameters near $P_*$ and update $P_*$ to be the one which evaluates to the largest value when this is larger than $\varphi_*$, then repeat. In this fashion, the algorithm greedily travels through $\mathcal P$ increasing the value of $\varphi$.

In \cite{dietmaier}, $\mathcal P$ is the parameter-space of a real parametrized polynomial system $G_{\mathcal P}=0$. The set $R$ is $\mathbb{N} \times \R$ equipped with the lexicographic ordering. The objective function takes a parameter $P$ to $(\rho,-m)$ where $\rho$ is the number of real solutions to $G_P=0$, and $m$ is the minimum norm of the complex part of a nonreal solution of $G_P=0$. Consequently, this algorithm tries to greedily push complex conjugate solutions together to make them real, while always prefering parameters with more real solutions.

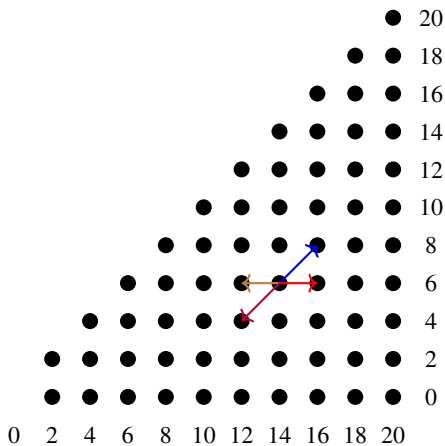
\begin{figure}[!htpb]
\begin{center}
  \begin{tikzpicture}[scale=0.5]
    \foreach \i in {0,2,4,6,8,10,12,14,16,18,20}
      \path[black] (\i/2,-1) node{\footnotesize{\i}} (11,\i/2) node{\footnotesize{\i}};
    \foreach \i in {0,2,4,6,8,10,12,14,16,18,20}
      \foreach \j in {0,2,4,6,8,10,12,14,16,18,20}{
      \if \i=0
      	\fill[black] (4,0) circle(6pt);
      \else
      	\ifnum \i> \j
        	\fill[black] (\i/2,\j/2) circle(6pt);
        \fi

      \fi
      };
      \foreach \i in {2,4,6,8,10,12,14,16,18,20}{
      	  \fill[black] (\i/2,\i/2) circle(6pt);
      	  };
      \draw[blue,thick,->] (7,3) -- (8,4);
      \draw[red,thick,->] (7,3) -- (8,3);
      \draw[brown,thick,->] (7,3) -- (6,3);
      \draw[purple,thick,->] (7,3) -- (6,2);
  \end{tikzpicture} 
  \caption{All combinatorial types of quintic spectrahedra.}
  \label{fig:alltypes}
\end{center}
  \end{figure}

To find each combinatorial type in Figure \ref{fig:alltypes}, we take a similar approach.  After solving $F_A(t,\bar{x})$ for some parameter $A$, we use four different hill-climbing algorithms, each of which attempts to move the combinatorial type of $V_A$ in one of the directions displayed in Figure \ref{fig:alltypes}.

We introduce some notation. For any two combinatorial types $\tau=(\rho,\sigma)$ and $\tau'=(\rho',\sigma')$, let $d(\tau,\tau')$ be their lattice distance in the lattice generated by the vectors in Figure \ref{fig:alltypes}. Given a nonreal matrix $B=B_\mathbb{R}+\sqrt{-1}B_{\mathbb{C}}$, let $\mu(B)=||B_{\mathbb{C}}||_2$. Let $\eta(B)=1$ if the largest three eigenvalues of $B_{\mathbb{R}}$ have the same sign and $0$ otherwise. For $A \in (\mathbb{S}^5(\mathbb{R}))^3$ let $S_{\R_+}(A)$ and $S_{\R_-}(A)$ denote the real semidefinite and real indefnite solutions to $F_A(t,\bar{x})$. Similarly, let $S_{\C_+}(A)$ denote the collection of nonreal solutions $B$ with $\eta(B)=1$ and $S_{\C_-}(A)$ the nonreal solutions with $\eta(B)=0$. For every $A \in (\mathbb{S}^5(\mathbb{R}))^3$, we have the functions
\begin{align*}
\mu_+(A) &= \min_{B \in S_{\C_+}(A)}(\mu(B)) \hspace{0.5 in }\delta_+(A) = \min_{B_1,B_2 \in S_{\R_+}(A)}(||B_1-B_2||_2)\\
\mu_-(A) &= \min_{B \in S_{\C_-}(A)}(\mu(B))\hspace{0.5 in }\delta_-(A) = \min_{B_1,B_2 \in S_{\R_-}(A)}(||B_1-B_2||_2)
\end{align*}
which formally evaluate to $+\infty$ when the minimum is taken over the empty set.

With these definitions, the following table  describes the four hill-climbing algorithms used to move from some type $\tau$ to another type $\tau' = \tau+$Direction. 

\begin{center}
{
\begin{tabular}{|c|c|c|c|}
\hline 
Direction & $R$ & $\varphi$ & Ordering\\
\hline \hline
$(+2,+2)$ & $\mathbb{N} \times \mathbb{R} \cup \{\infty \}$ & $(d(\tau',(\rho,\sigma)),-\mu_+(A))$& Lex\\
\hline
$(+2,0)$ &  $\mathbb{N} \times \mathbb{R} \cup \{\infty \}$  & $(d(\tau',(\rho,\sigma)),-\mu_-(A))$ & Lex \\
\hline
$(-2,-2)$ &  $\mathbb{N} \times \mathbb{R} \cup \{\infty \}$  & $(d(\tau',(\rho,\sigma)),-\delta_+(A))$ & Lex\\
\hline
$(-2,0)$ &  $\mathbb{N} \times \mathbb{R} \cup \{\infty \}$ & $(d(\tau',(\rho,\sigma)),-\delta_-(A))$ & Lex \\
\hline
\end{tabular}
}
\end{center}

Rows correspond to arrows in Figure \ref{fig:alltypes} and describe algorithms which attempt to push two solutions (in $S_{\C_+}$,$S_{\C_-}$, $S_{\R_+}$, or $S_{\R_-}$ respectively) together.

Since hill-climbing methods get stuck at local minima, it was often necessary to restart the algorithm with new initial values many times until one became successful. For some of the most difficult cases like those near $(20,20)$ and $(20,0)$, monitoring of the algorithm was required due to ill-conditioning.

\subsection{Certification}

Solving $F_A(t,\bar{x})$ numerically only provides approximate solutions and so \emph{a posteriori} certification methods are necessary in order to verify how many of these solutions are real, how many correspond to semidefinite matrices, and ultimately whether the solutions are even correct. 

Certifying the correctness and reality of a set of solutions to a polynomial system is usually simple and can be performed via the command \texttt{certify} \cite{BRT} in \texttt{HomotopyContinuation.jl}. The input of this command is a collection $S$ of approximate solutions to a polynomial system $F$, and when successful, the output is a collection of $|S|$ bounding boxes, each containing a unique solution to $F$. This method, however, cannot be immediately applied to our situation because the system $F_A(t,\bar{x})$ is not \emph{square}: there are more equations than variables. Nor does this method verify which solutions correspond to semidefinite matrices.

To remedy these issues, we solve a different polynomial system  
$G_A(t,\bar{x},d,M)$:
\begin{equation}
\begin{aligned}  
\frac{\partial}{\partial x_1} D_A(t,\bar{x}) &=0 \hspace{0.33 in} \frac{\partial}{\partial x_2} D_A(t,\bar{x}) =0 \hspace{0.175 in} \frac{\partial}{\partial x_3} D_A(t,\bar{x}) 
=0\\
\ell(t,\bar{x}) &= 0, \hspace{0.32 in} \det(A(t,\bar{x})) = d,\hspace{0.2 in}  (A(t,\bar{x}))_I = M_{I}.
\end{aligned}
\end{equation}
The system $G_A(t,\bar{x},d,M)$ consists of three out of four of the equations of \eqref{eq:node_equations}, the affine chart $\ell(t,\bar{x})$, an equation tracking the value of the determinant of $A(t,\bar{x})$, and equations tracking each of the values of the $5+10+10=25$ principal minors $M_I$ indexed by subsets $I$ of $\{1,\ldots,5\}$ of sizes $1$, $2$, and $3$. In total, $G_A(t,\bar{x},d,M)$ is a polynomial system consisting of $30$ equations in $30$ unknowns. Since it is square, we can certify its solutions.

The total number of solutions to the first three equations is $64$. This is their B\'ezout bound. The other equations and variables form a graph of a function over this set. When distinct, at least $20$ of these $64$ solutions project to solutions of $F_A(t,\bar{x})$ by Remark \ref{rem:atleasttwenty}. Those which do are the points whose $d$-coordinate equals zero. Therefore, showing that $44$ of the solutions do not have $0$ in their certified bounding interval in the $d$-coordinate proves that the other $20$ solutions have determinant equal to zero. Similarly, one certifies the signs of the minors whenever $0$ is not contained in the corresponding bounding intervals. These facts suggest an algorithm for certifying combinatorial.
\begin{algorithm}
\label{alg:certification}
{\color{white}{.}}\\
{\bf Input}: Three real symmetric matrices $A=(A_1,A_2,A_3).$\\
{\bf Output}: The combinatorial type $(\rho,\sigma)$ of $V_A$, or \texttt{Unsuccessful}.
\begin{enumerate}
\itemsep-0.25em 
\item Solve $G_A(t,\bar{x},d,M)$ numerically to obtain $S$, consisting of $64$ solutions.
\item Run \texttt{certify} to obtain $64$ bounding boxes for these solutions.
\item Delete $44$ bounding boxes which do not contain $0$ in the $d$-coordinate. 
\item Count $\rho$ to be the number of remaining boxes which are real. 
\item For each real box, determine the signs of the $M$-coordinates.
\item Count $\sigma$ to be the number of boxes whose $M$-coordinates imply semidefiniteness of the solution.
\item If successful, return $(\rho,\sigma)$, otherwise, return \texttt{Unsuccessful}.
\end{enumerate}

\end{algorithm}
We successfully performed Algorithm \ref{alg:certification} on witnesses for each of the $65$  combinatorial types. Certificates and necessary software for reproducibility may be found at 
\begin{equation} \label{eq:ourwebsite}
\href{https://mathrepo.mis.mpg.de/QuinticSpectrahedra/index.html}{https://mathrepo.mis.mpg.de/}
\end{equation}
Additionally, \eqref{eq:ourwebsite} contains a gallery of images displaying examples of the $65$ combinatorial types. Figure \ref{fig:spec2020} shows two extremal cases: $(20,20)$ and $(20,0)$.

\begin{figure}[!htpb]
\centering
\includegraphics[width=6.2cm]{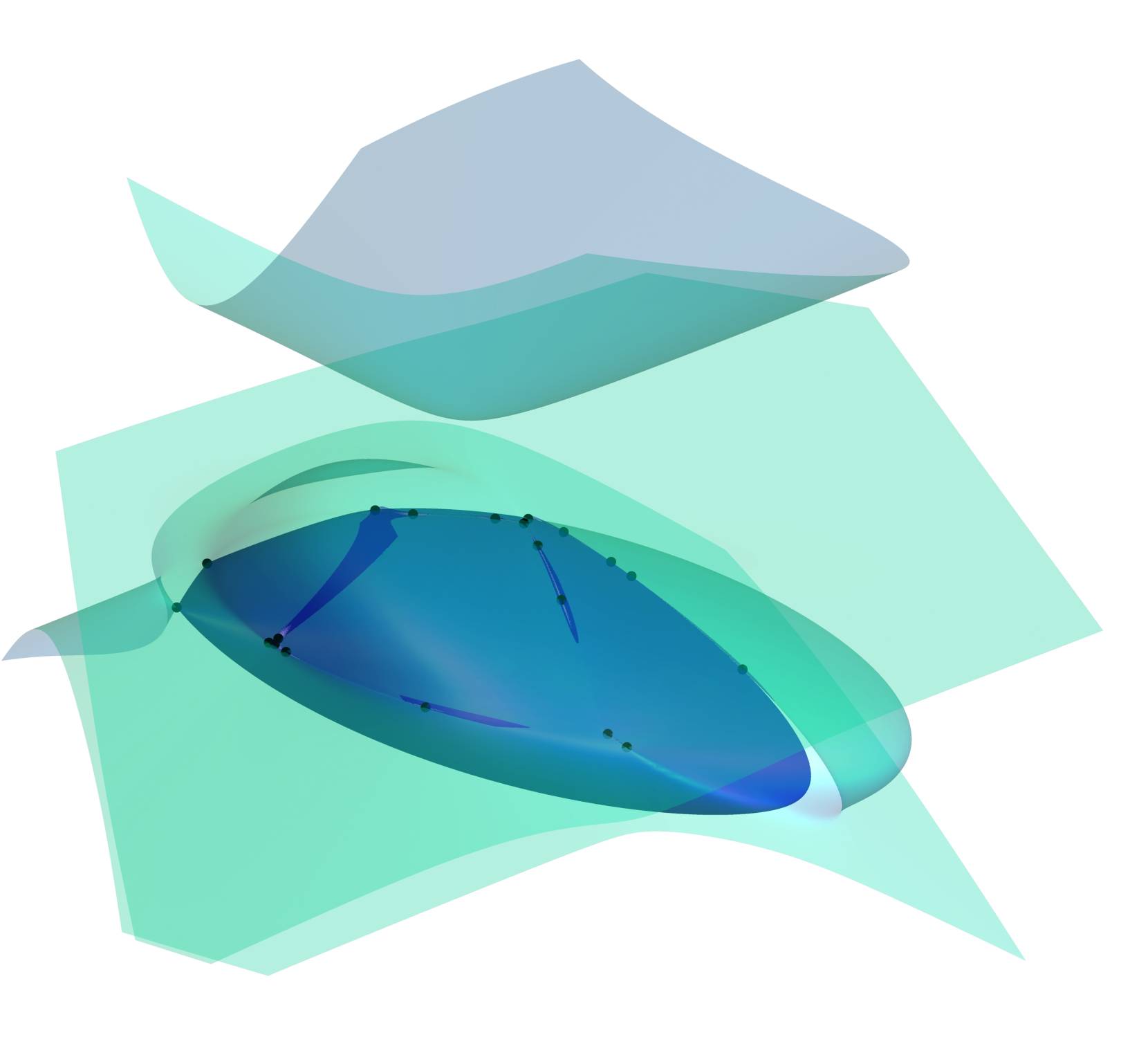}
\hspace{-0.5cm}
\includegraphics[width=6.2cm]{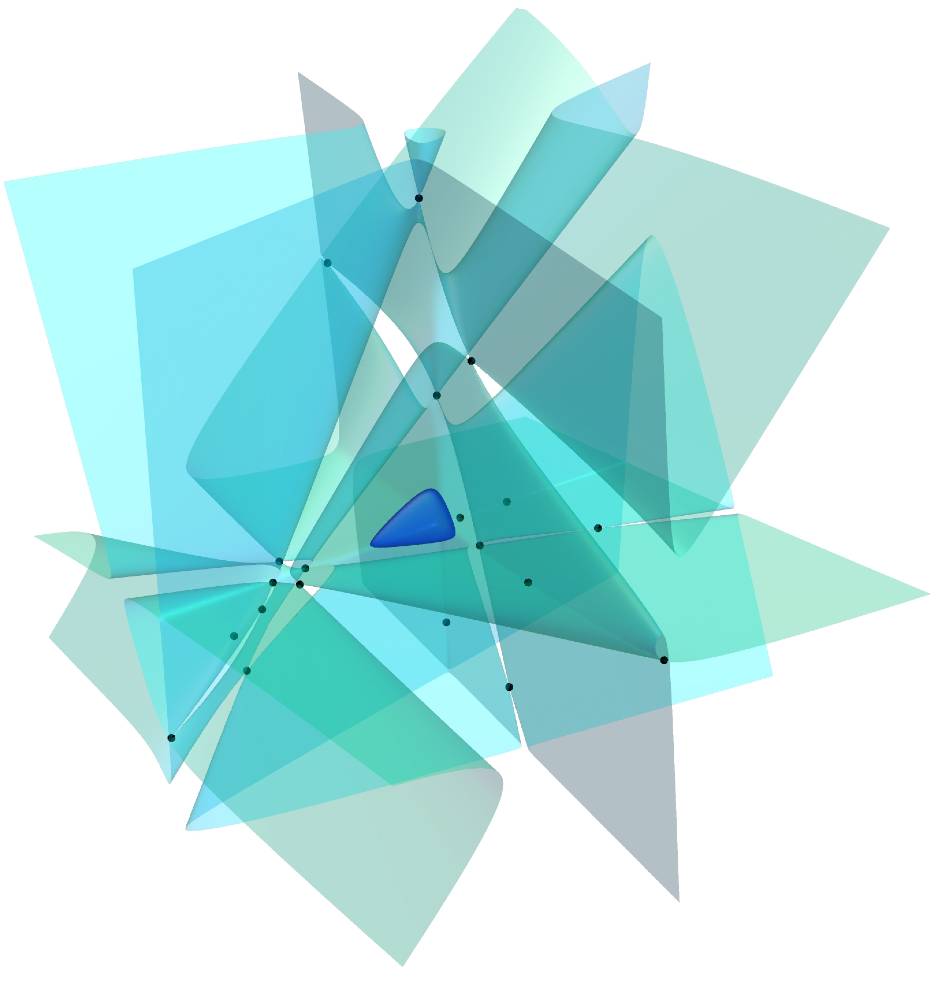}
\caption{Quintic spectrahedra of types $(20,20)$ and $(20,0)$.}
\label{fig:spec2020}
\end{figure}

\section*{Acknowledgements}
This work is a part of the collaboration project ``Linear Spaces of Symmetric Matrices'' at MPI MiS and worldwide. We would like to thank Orlando Marigliano, Mateusz Micha{\l}ek, Kristian Ranestad, Tim Seynnaeve, and Bernd Sturmfels for coordinating the project and inspiring the present work. We would also like thank Cynthia Vinzant for her help in visualizing spectrahedra, as well as Sascha Timme and Paul Breiding for their help with the certification.


\begin{thebibliography}{99}

\bibitem{BRT} P.~Breiding, K.~Rose and S.~Timme:
{\em Certifying roots of polynomial systems using interval arithmetic},
\newblock { arXiv preprint arXiv:2011.05000}, 2020.


\bibitem{jl}
P. Breiding and S. Timme, {\it HomotopyContinuation.jl: A package for homotopy continuation in Julia}, pp. 458--465 in Mathematical software --- ICMS 2018 (South Bend, IN, 2018), edited by J.H. Davenport et al., Lecture Notes in Computer Science \textbf{10931}, Springer 2018.

\bibitem{DI2011}
A. Degtyarev and I. Itenberg, {\it On real determinantal quartics}, Proceedings G\"okova Geometry-Topology Conference 2010,  Int. Press, Somerville, MA (2011), 110--128.
  

\bibitem{dietmaier} P.~Dietmaier: {\em The Stewart-Gough platform of general 
geometry can have 40 real postures}, Lenar\v{c}i\v{c} J., Husty M.L. (eds): Advances in Robot Kinematics: Analysis and Control, 7--16, Springer, Dordrecht, 1998.
  
  
\bibitem{FRS84}
S. Friedland, J. W. Robbin and J. H. Sylvester, {\it On the crossing rule}, Communications on Pure and Applied Mathematics, 37 (1) (1984), 19--37.

\bibitem{highcodim}
J. Harris and L. W. Tu, {\it On symmetric and skew-symmetric determinantal varieties}, Topology, 23 (1) (1984), 71--84.

\bibitem{HV2007}
J. W. Helton and V. Vinnikov, {\it Linear Matrix Inequality Representation of Sets}, Communications on Pure and Applied Mathematics, 60 (5) (2007), 654--674.

\bibitem{quartic}
J. C. Ottem, K. Ranestad, B. Sturmfels and C. Vinzant, {\it Quartic spectrahedra}, Math. Program., 151 (2, Ser. B) (2015), 585--612.

  
\end{thebibliography}
\end{document}